\documentclass[preprint,11pt]{elsarticle}
\usepackage{amsfonts}
\usepackage{}
\usepackage{amsmath,amssymb,amsthm}
\usepackage{graphicx}
\usepackage{color}
\usepackage{soul} 
\usepackage{color, xcolor} 

\makeatletter
\def\ps@pprintTitle{%
  \let\@oddhead\@empty
  \let\@evenhead\@empty
  \def\@oddfoot{\reset@font\hfil\thepage\hfil}
  \let\@evenfoot\@oddfoot}
\makeatother

\newtheorem{defin}{Definition}[section]
\newenvironment{definition}{\begin{defin}\rm}{\end{defin}}
\newtheorem{theorem}[defin]{Theorem}
\newtheorem{lemma}[defin]{Lemma}
\newtheorem{proposition}[defin]{Proposition}
\newtheorem{corollary}[defin]{Corollary}
\newtheorem{qu}{Question}
\newtheorem*{maintheorem}{Main Theorem}
\newtheorem{exa}[defin]{Example}

\newenvironment{question}{\begin{qu}\rm}{\end{qu}}

\begin{document}

\begin{frontmatter}

\title{Quotient homomorphisms of Topological \(MV\)-Algebras and Applications\corref{cor2}}

\author[mymainaddress]{Li-Hong Xie}
\ead{yunli198282@126.com}

\author[mymainaddress1]{Jiang Yang\corref{cor1}} 
\ead{yangjiangdy@126.com}

\cortext[cor1]{Corresponding author: Jiang Yang}
\cortext[cor2]{The project is supported by the Natural Science Foundation of Guangdong Province under Grant (No. 2021A1515010381) and the Innovation Project of Department of Education of Guangdong Province (No. 2022KTSCX145).}
\address[mymainaddress]{School of Mathematics and Computational Science, Wuyi University, Jiangmen, Guangdong, 529000, P.R. China}

\address[mymainaddress1]{ School of Mathematical Sciences, Guangxi Minzu University, Nanning, 530006, P.R. China}

\begin{abstract}
For a topological group, the quotient map modulo a subgroup is open and the quotient
map modulo a compact subgroup is perfect. In this paper we prove and develop the
corresponding compact-ideal theory for topological \(MV\)-algebras. We show that if \(I\) is an ideal of a topological \(MV\)-algebra \(A\), then the natural quotient homomorphism \(q:A\longrightarrow A/I\), where \(A/I\) is endowed with the quotient topology, is a continuous open quotient map and \(A/I\) is again a topological \(MV\)-algebra. If, in addition, \(I\) is compact, then \(q\) is perfect.
 
As applications, we study three-space phenomena in topological \(MV\)-algebras. Under
compact-kernel hypotheses we prove three-space theorems for compactness, local compactness,
\(\sigma\)-compactness, Lindel\"ofness and paracompactness under the separation hypotheses stated
below. We also prove a first-countability three-space theorem for locally convex topological
\(MV\)-algebras. 
 
\end{abstract}

\begin{keyword}
topological \(MV\)-algebra \sep compact ideal \sep quotient map \sep closed map \sep perfect map \sep three-space property \sep first countability

\MSC[2020] 06D35 \sep 54H13 \sep 54C10 \sep 54D30 \sep 54D45
\end{keyword}

\end{frontmatter}

\section{Introduction}
 All topological spaces in this paper are assumed to satisfy no separation axioms unless otherwise stated.

In recent decades, a number of algebraic structures associated with logical systems have been studied by many mathematicians.
 For instance, Y. Imai and K. Is\'{e}ki \cite{Imai} introduced BCK-algebras as an algebraic formulation of Meredith's BCK-implicational calculus.
  To investigate many-valued logic by algebraic means, BL-algebras have been defined by
H\v{a}jek \cite{Haj}. Undoubtedly, MV-algebras, which were introduced by Chang \cite{Chang} in order to show {\L}ukasiewicz logic to be standard complete,
are among the most important structures associated with logical systems, where ``MV'' is short for ``many-value''. Furthermore, a number of algebraic structures
associated with logical systems endowed with a topology have been investigated by several mathematicians \cite{Bor,Hav, Rou,Yang}.
In particular, Hoo \cite{Hoo1997} introduced the notion of a topological MV-algebra, which means an MV-algebra $(A,\oplus,\ast,0)$ with a topology such that the operations
$\oplus$ and $\ast$ are continuous functions. Some fundamental properties were investigated by Hoo.

Quotient constructions are central in the algebraic theory of \(MV\)-algebras.  If \(I\) is an ideal of an \(MV\)-algebra \(A\), then the quotient \(A/I\) is again an \(MV\)-algebra, and the natural map
\[
        q:A\longrightarrow A/I
\]
is an \(MV\)-homomorphism.  When \(A\) is endowed with a topology making it a topological \(MV\)-algebra, it is natural to put the quotient topology on \(A/I\).  A basic question is then how much of the topology of \(A\) can be transferred to, or recovered from, the quotient.

For topological groups, a standard fact is that quotient maps modulo subgroups are open, and
quotient maps modulo compact subgroups are perfect. This fact is one reason compact kernels
are effective in topological group theory. The aim of this paper is to establish the corresponding
quotient theorem for topological \(MV\)-algebras.

\begin{maintheorem}
Let \(A\) be a topological \(MV\)-algebra and let \(I\) be an ideal of \(A\). Let
\[
        q:A\longrightarrow A/I
\]
be the natural quotient homomorphism, where \(A/I\) is endowed with the quotient topology. Then \(q\) is continuous and open,  and \(A/I\) is a topological \(MV\)-algebra. If \(I\) is compact, then q is perfect.
\end{maintheorem}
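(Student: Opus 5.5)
The plan is to realize the congruence of $I$ through the distance function of the $MV$-algebra. Recall that $x\sim_I y$ iff $d(x,y)=(x\odot y^\ast)\oplus(y\odot x^\ast)\in I$, where $x\odot y=(x^\ast\oplus y^\ast)^\ast$. Continuity of $q$ holds by definition of the quotient topology, so the substance is openness.

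\textbf{Openness.} For $U\subseteq A$ open we must show that the saturation
\[
q^{-1}(q(U))=\{y\in A:\ \exists x\in U,\ d(x,y)\in I\}
\]
is open. The key identity I would use is the standard one: $d(x,y)\in I$ iff there exist $i,j\in I$ with $x\oplus i=y\oplus j$. In fact, if $d(x,y)\in I$, then
\[
x\oplus d(x,y)\ \ge\ x\oplus(y\odot x^\ast)=x\vee y,
\]
and similarly for $y$, so $x\oplus i=y\oplus j=x\vee y$ after truncating with $\wedge$. A cleaner route is via translations. For fixed $a\in I$ the map $t_a\colon z\mapsto (z\oplus a)\odot(\ldots)$ is not invertible, so I would instead exhibit the saturation as
\[
q^{-1}(q(U))=\bigcup_{i\in I}\{y:\ y\odot i^\ast\ \ldots\}.
\]
The concrete plan is as follows. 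Given $y_0$ with $x_0\in U$ and $d(x_0,y_0)=e\in I$, consider the continuous map
\[
\varphi(y)=\bigl((y\oplus e)\wedge(x_0\oplus e)\bigr)\odot\ldots
\]
More concretely, take
\[
\psi(y)=(x_0\wedge y)\vee\bigl(x_0\odot (y\oplus e)^{\ast\ast}\bigr),
\]
or simply
\[
\psi(y)=x_0\wedge(y\oplus e)\ \vee\ \ldots
\]
The aim is a continuous $\psi\colon A\to A$ (built from $\oplus,\ast$, and thus from $\wedge,\vee$, all continuous) such that $\psi(y)\sim_I y$ for every $y$ near $y_0$ and $\psi(y_0)=x_0$. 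Then $\psi^{-1}(U)$ is an open neighbourhood of $y_0$ inside the saturation.

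A natural candidate is
\[
\psi(y)=\bigl(y\vee(x_0\odot e^\ast)\bigr)\wedge(x_0\oplus e)\ \text{ adjusted,}
\]
but the simplest is $\psi(y)=(y\odot e^\ast)\oplus(x_0\odot e)$-type combinations. What is needed of $\psi$ is:
\begin{itemize}
\item $d(\psi(y),y)\le$ some element of $I$ for every $y$;
\item $\psi(y_0)=x_0$.
\end{itemize}
I would take $\psi(y)=\bigl(y\vee(x_0\odot e^\ast)\bigr)\wedge(x_0\oplus e)$. By the identities $d(y\vee u,y)\le d(u,y)$-style bounds, $d(\psi(y),y)$ is bounded by elements of the ideal generated by $e$ and $d(x_0,y)$, which is not automatically in $I$. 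Therefore I expect the decisive step to be the verification, using the Chang distance inequalities $d(a\vee b,c\vee b)\le d(a,c)$ and $d(a\wedge b,c\wedge b)\le d(a,c)$ together with $d(x_0\odot e^\ast,x_0)\le e$ and $d(x_0\oplus e,x_0)\le e$, that $\psi(y)\equiv y$ modulo $I$ for all $y$ in the saturation. This choice of $\psi$ and its verification is the main obstacle, and I would fall back on the ideal-translation description above if it fails. Once openness of $q$ is established, $q\times q$ is also open, hence a quotient map. Then $\oplus$ and $\ast$ on $A/I$ are continuous, because $\oplus_{A/I}\circ(q\times q)=q\circ\oplus$ is continuous, and likewise for $\ast$.

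\textbf{Perfectness for compact $I$.} Fibers are $q^{-1}(q(x))=\{y: d(x,y)\in I\}$. I would show each fiber is compact by writing it as a continuous image of a compact set. The map $(i,j)\mapsto (x\oplus i)\odot j^\ast$ sends $I\times I$ onto a set containing the fiber. Indeed, for $y$ in the fiber we have $y=(x\oplus y\odot x^\ast)\odot(x\odot y^\ast)^\ast$ whenever $y\le x\oplus(y\odot x^\ast)$, and that image lies within the fiber. Closedness of $q$ then follows as in groups: if $F$ is closed and $q(y)\notin q(F)$, then the compact fiber $K=q^{-1}q(y)$ misses $F$. Using continuity of $d$ at $K\times\{\text{pt}\}$ and a tube-lemma argument, I would find an open $V\ni y$ whose saturation misses $F$, and $q(V)$ is then open by openness of $q$.
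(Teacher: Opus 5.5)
You have the right framework for openness: a continuous $\psi$ with $\psi(y_0)=x_0$ and $\psi(y)\equiv_I y$, so that $\psi^{-1}(U)$ is a neighbourhood of $y_0$ inside $q^{-1}(q(U))$. But you never produce such a $\psi$, and the one you settle on fails. With $e=d(x_0,y_0)$ you have $x_0\ominus e\le x_0\ominus(x_0\ominus y_0)=x_0\wedge y_0\le y_0\le x_0\vee y_0=x_0\oplus(y_0\ominus x_0)\le x_0\oplus e$. So $\psi(y)=\bigl(y\vee(x_0\ominus e)\bigr)\wedge(x_0\oplus e)$ is a clamp that fixes $y_0$: $\psi(y_0)=y_0$, not $x_0$, and $\psi^{-1}(U)$ need not contain $y_0$.

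Worse, $q(x_0\ominus e)=q(x_0\oplus e)=q(x_0)$ gives $q(\psi(y))=(q(y)\vee q(x_0))\wedge q(x_0)=q(x_0)$ for every $y$. Thus $\psi(y)\equiv_I y$ holds exactly when $y\in x_0/I$, which is what you wanted to prove. The verification you flag as ``the main obstacle'' is therefore circular. The fallback via ideal translations does not help either. The maps $z\mapsto(z\ominus i)\oplus j$ are in general neither injective nor open, so writing $q^{-1}(q(U))$ as a union of images of $U$ under them says nothing about openness. Your continuity argument for $\oplus$ and ${}^\ast$ on $A/I$ rests on $q\times q$ being open, so the whole first half of the theorem is left unproved.

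The missing idea is to transport along the decomposition $y=(y_0\wedge y)\oplus(y\ominus y_0)=\bigl(y_0\ominus(y_0\ominus y)\bigr)\oplus(y\ominus y_0)$, replacing $y_0$ by the congruent point $x_0$. Set $\psi(y)=\bigl(x_0\ominus(y_0\ominus y)\bigr)\oplus(y\ominus y_0)$. It is continuous and $\psi(y_0)=x_0\in U$. Since $q(x_0)=q(y_0)$, for every $y\in A$ you get $q(\psi(y))=\bigl(q(y_0)\wedge q(y)\bigr)\oplus\bigl(q(y)\ominus q(y_0)\bigr)=q(y)$. Hence $\psi^{-1}(U)$ is an open neighbourhood of $y_0$ inside the saturation; this is exactly the paper's map $\Theta_v$.

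Your perfectness part is essentially sound. The map $(i,j)\mapsto(x\oplus i)\ominus j$ does send $I\times I$ onto $x/I$. The identity $y=(x\vee y)\ominus(x\ominus y)=\bigl(x\oplus(y\ominus x)\bigr)\ominus(x\ominus y)$ holds unconditionally, so your ``whenever'' clause is superfluous, and the image lies in $x/I$ because $q(i)=q(j)=0$. For closedness, apply the tube lemma not to $d$ but to the continuous map $(v,i,j)\mapsto(v\oplus i)\ominus j$ on $A\times(I\times I)$. This shows directly that $\{v: v/I\cap F=\emptyset\}=A\setminus q^{-1}(q(F))$ is open. That is the paper's argument with the other parametrization, and it does not even need openness of $q$.
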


The compact-ideal theorem also suggests a more systematic viewpoint.  In the theory of topological groups one often studies properties of a group \(G\), a closed normal subgroup \(H\), and the quotient \(G/H\) simultaneously.  A property is called a three-space property if any two of these three objects having the property force the third one to have it. For topological \(MV\)-algebras the analogous triple is
\[
        I,\qquad A,\qquad A/I.
\]
Unlike the group case, an ideal is not a translated copy of the zero fibre under homeomorphisms; nevertheless compact ideals provide enough control over quotient fibres to recover a substantial part of the group-theoretic picture. In the compact-ideal case the quotient map is perfect, and therefore many classical properties governed by perfect maps become compact-kernel three-space properties for topological \(MV\)-algebras.  One of the main additional results of this paper is that first countability also has such a three-space form in local convex topological \(MV\)-algebra.

The paper is organized as follows. Section \ref{sec:preliminaries} recalls the needed notation for \(MV\)-algebras and topological \(MV\)-algebras.  Section \ref{sec:quotient-open} proves that natural quotient homomorphisms are open and that quotient \(MV\)-algebras with the quotient topology are topological \(MV\)-algebras.  Section \ref{sec:compact-ideal-closed} proves the compact ideal perfect mapping theorem.  Section \ref{app}  gives applications to three-space properties for topological \(MV\)-algebras. We prove compact-kernel three-space theorems: compactness, local compactness, \(\sigma\)-compactness, Lindel\"ofness and Hausdorff paracompactness are equivalent for \(A\) and \(A/I\). We also prove a first-countability three-space theorem: let \(A\) be a local convex topological \(MV\)-algebra and \(I\) be an ideal of \(A\); then \(A\) is first countable if and only if both \(I\) and \(A/I\) are first countable.

\section{Definitions and Preliminaries}\label{sec:preliminaries}
In this section, we will provide basic terminologies and notations of $MV$-algebras
which are necessary for the understanding of subsequent results.

\begin{definition}\label{Def:1}\cite{Chang,CD,Mun}
An $MV$-algebra is a structure $(A,\oplus,{}^\ast, 0)$ such that $(A,\oplus, 0)$ is a commutative monoid, $x^{\ast\ast}=x$, $x\oplus 0^\ast=0^\ast$ and $(x^\ast\oplus y)^\ast\oplus y=(y^\ast\oplus x)^\ast\oplus x$ for all $x,y\in A$.
\end{definition}

In what follows, unless a singularity arises, we will write $A$ for the MV-algebra $(A,\oplus,\ast, 0)$.
In an $MV$-algebra $A$ for any $x, y \in A$ we put:

\[
        1:=0^\ast,
        \qquad
        x\odot y:=(x^\ast\oplus y^\ast)^\ast,
        \qquad
        x\ominus y:=x\odot y^\ast.
\]
Every MV-algebra $A$ carries a partial order $\le $, called its natural order, defined by
\[
        x\leq y\quad\Longleftrightarrow\quad x^\ast\oplus y=1.
\]
With respect to this order,  $A$ becomes a bounded distributive lattice in which, for all \(A\),
\[
        x\vee y=y\oplus(x\ominus y),
        \qquad
        x\wedge y=x\odot(x^\ast\oplus y).
\]

\begin{exa}
Let $[0,1] = \{x \in \mathbb{R} \mid 0 \leq x \leq 1\}$ be equipped with the truncated addition
$x \oplus y := \min\{1, x + y\}$, negation $x^{\ast} := 1 - x$, and identity $0$.
Then $([0,1], \oplus, \ast, 0)$ is an MV-algebra, known as the standard MV-algebra,
which we denote simply by $[0,1]$.
\end{exa}

\begin{lemma}\label{Lem:1}
Let $A$ be an MV-algebra. For any $x,y,z\in A$, the following statements hold:

\begin{enumerate}
 \item[(1)] $ x =(x \wedge y) \oplus(x\ominus y) $ \cite{CD};
 \item[(2)] $ x\ominus(x\ominus y)=x\wedge y$ \cite{CD};
 \item[(3)] $ x\ominus z\leq (x\ominus y)\oplus(y\ominus z);$
 \item [(4)] $(x\oplus y)\ominus x\leq y $ \cite[Lemma 2.4]{LuanZhaoYang2020};
\item [(5)] $x \leq y \Leftrightarrow  y^\ast \leq x^\ast$ \cite[Theorem 1.4 (vi)]{Chang};
\item [(6)] if $x\leq y$, then $ x \oplus z \leq  y \oplus z, x \odot z \leq  y \odot z$ \cite[Theorem 1.8]{Chang},
 \end{enumerate}
\end{lemma}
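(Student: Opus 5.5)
The only item of Lemma~\ref{Lem:1} without a citation is (3), the triangle-type inequality $x\ominus z\leq (x\ominus y)\oplus(y\ominus z)$. I will prove it directly and take the cited items (1), (2), (4), (5), (6) from the references. The plan is to split $x$ using (1) with respect to $y$, estimate each piece against $x\ominus z$, and then recombine.

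By (1), $x=(x\wedge y)\oplus(x\ominus y)$. Hence
\[
x\ominus z=\bigl((x\ominus y)\oplus(x\wedge y)\bigr)\odot z^\ast .
\]
I will use the standard MV-identity $(a\oplus b)\ominus c\leq a\oplus(b\ominus c)$, taking $a=x\ominus y$, $b=x\wedge y$, $c=z$. To justify this identity, I would use the residuation property $u\ominus c\leq w\iff u\leq w\oplus c$, which holds in every MV-algebra (see \cite{CD}). With it, the identity reduces to $a\oplus b\leq a\oplus(b\ominus c)\oplus c$. This in turn follows from $b\leq (b\ominus c)\oplus c=b\vee c$ together with monotonicity (6).

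This gives
\[
x\ominus z\leq (x\ominus y)\oplus\bigl((x\wedge y)\ominus z\bigr).
\]
Since $x\wedge y\leq y$, applying (6) to $\odot z^\ast$ yields $(x\wedge y)\ominus z\leq y\ominus z$. Applying (6) once more, now with $\oplus (x\ominus y)$, gives the claimed inequality.

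The main obstacle is justifying the residuation law and the identity $(b\ominus c)\oplus c=b\vee c$ from Definition~\ref{Def:1} alone. The second is immediate from the displayed formula $x\vee y=y\oplus(x\ominus y)$ in the preliminaries. For residuation, I would cite \cite{CD}. Alternatively, since all terms are MV-terms, (3) is an inequality between MV-terms, so by Chang's completeness theorem \cite{Chang} it suffices to check it in $[0,1]$. There it reads
\[
\max(0,x-z)\leq\min\bigl(1,\max(0,x-y)+\max(0,y-z)\bigr),
\]
which is the elementary truncated triangle inequality.
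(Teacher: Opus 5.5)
Your proof is correct. The paper gives no proof of this lemma. Items (1), (2), (4), (5), (6) are cited, and item (3) is stated with neither proof nor reference (it is also never used later in the paper). So your argument supplies something the paper omits rather than following or departing from a route it takes.

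Each step checks out. With $a=x\ominus y$, $b=x\wedge y$, $c=z$, the auxiliary inequality $(a\oplus b)\ominus c\le a\oplus(b\ominus c)$ follows from $b\le b\vee c=c\oplus(b\ominus c)$, monotonicity (6) and residuation. The final two applications of (6) are also legitimate.

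The step you flag as the main obstacle is immediate from the paper's own definition of the order. Since $(u\ominus c)^\ast=u^\ast\oplus c$, we get $u\ominus c\le w$ iff $u^\ast\oplus c\oplus w=1$ iff $u\le c\oplus w$. So you need neither a citation nor Chang's completeness theorem. The completeness fallback is nonetheless valid, because $s\le t$ is the equation $s^\ast\oplus t=1$.

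Once residuation is available, there is a shorter proof that skips both the decomposition (1) and your auxiliary inequality:
$(x\ominus y)\oplus(y\ominus z)\oplus z=(x\ominus y)\oplus(y\vee z)\ge(x\ominus y)\oplus y=x\vee y\ge x$.
Residuation then gives $x\ominus z\le(x\ominus y)\oplus(y\ominus z)$ directly.
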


\begin{definition}\label{Def:2}\cite{CD}
An ideal of an \(MV\)-algebra \(A\) is a subset \(I\subseteq A\) such that \(0\in I\), \(y\leq x\in I\) implies \(y\in I\),
and \(x,y\in I\) implies \(x\oplus y\in I\).
\end{definition}

Let \( A \) and \( B \) be $MV$-algebras. Recall that a mapping \( f : A \to B \) is a \(MV\)-\textbf{homomorphism} \cite{CD} if it satisfies the following conditions: for all \( x, y \in A \),

\begin{enumerate}
    \item[(i)] \( f(0) = 0 \);
    \item[(ii)] \( f(x \oplus y) = f(x) \oplus f(y) \);
    \item[(iii)] \( f(x^*) = (f(x))^* \).
\end{enumerate}

If \( f \) is one-to-one, we say that \( f \) is \textbf{injective}. If \( f \) is onto, we say that \( f \) is \textbf{surjective}. By an \(MV\)-\textbf{isomorphism}, we mean a surjective and one-to-one \(MV\)-homomorphism. We write \( A \cong B \) if there exists an isomorphism from \( A \) onto \( B \).

For any MV-algebra \( A \), there exists a distance function \cite[Definition 1.2.4]{CD} defined by
\[
d(x, y) := (x \ominus y) \oplus (y \ominus x), \quad \text{for all } x, y \in A,
\]
which is called the {\it Chang distance}.

Recall from \cite[ Proposition 1.2.6]{CD} that for any ideal \( I \) of an MV-algebra \( A \), there exists a corresponding congruence relation \( \equiv_I \) on \( A \), defined by
\[
        x\equiv_I y
        \quad\Longleftrightarrow\quad
        d(x,y)\in I.
\]
The equivalence class of \( x \in A \) with respect to \( \equiv_I \) is denoted by \( x/I \), and the quotient set \( A/I \) is denoted by \( A/I \). Since \( \equiv_I \) is a congruence relation, the quotient set \( A/I \) inherits an MV-algebra structure from \( A \). The operations are naturally defined by

\[
(x/I)^\ast := x^\ast/I , \quad    x/I \oplus y/I := (x \oplus y)/I  \text{~for all~} x,y\in A,
\]
 The resulting system \( (A/I, \oplus, \ast, 0/I) \) is an MV-algebra, called the {\it quotient
algebra of $A$} by the ideal $I$. Moreover, the correspondence \(x \rightarrow x/I\)
defines a homomorphism \(\pi_I\) from \(A\) onto the quotient algebra \(A/I\),
which is called the {\it natural homomorphism} from \(A\) onto \(A/I\) (see \cite{CD}).

 \begin{definition}\cite{Hoo1997}
A topological \(MV\)-algebra is an \(MV\)-algebra \(A\) equipped with a topology such that
\[
        \oplus:A\times A\longrightarrow A,
        \qquad
        {}^\ast:A\longrightarrow A
\]
are continuous. Then the term operations \(\odot,\ominus,\vee,\wedge\) are also continuous.
\end{definition}

For \(U\subseteq A\) and \(a\in A\), define
\[
        U(a)=\{x\in A:a\ominus x\in U\text{ and }x\ominus a\in U\}.
\]
If \(U\) is a neighborhood of \(0\), then \(U(a)\) is a neighbourhood of \(a\) (see \cite[Lemma 3.3]{GLD}), because it is the inverse image of \(U\times U\) under the continuous map
\(     x\longmapsto (a\ominus x,x\ominus a).
\)

\section{ Open quotient \(MV\)-homomorphisms}\label{sec:quotient-open}
One of the important operations on topological universal algebras is that of taking their quotients. In this section, we will discuss
quotient homomorphisms between topological \(MV\)-algebras.

Let \(f:X\longrightarrow Y\) be a surjective map from  topological space \(X\) onto a set \(Y\). The \textbf{quotient topology} on \(Y \) induced by \( f \) is
\[
\tau_f=\{ U \subseteq Y \mid f^{-1}(U) \text{~is open in ~} X\}.
\]

Let \(f:X\rightarrow Y\) be a surjective map between topological spaces. Then \(f\) is called a \textbf{quotient map} in the topological sense if the set \(O\) is open in \(Y\) if and only if \(f^{-1}(O)\) is open in \(X\) \cite[Proposition 2.3]{Engelking1989}. Clearly, the quotient topology \(\tau_f\) on \(Y\) induced by \( f \) is exactly the original topology on \(Y\).

Let \(I\) be an ideal of the topological \(MV\)-algebra \(A\). Then \(A/I\) naturally becomes an \(MV\)-algebra \cite{CD}.  Let \[\pi:A\rightarrow A/I\] be the natural quotient map in the \(MV\)-algebra sense.
Then it is natural to ask whether the \(MV\)-algebra \(A/I\) endowed with the quotient topology induced by \(\pi\) is a topological \(MV\)-algebra.
Recently, Gan, Luan, Deng and Yang \cite{GLD} studied this question. They obtained the following result:

\begin{theorem}\cite[Theorem 5.6]{GLD}\label{Them:GY}
Let \( (A, \tau) \) and \( (B, \pi) \) be topological MV-algebras, and \( f : A \to B \) be an open continuous surjective homomorphism. Let \( \rho : A \to A/\operatorname{Ker} f \) be the natural homomorphism. Then the following statements hold.
\begin{enumerate}
\item[(i)] \( A/\operatorname{Ker}(f) \) endowed with the topology is a topological \(MV\)-algebra.
\item[(ii)] There exists a unique topological isomorphism \(\bar{f}\) from \((A/\operatorname{Ker}(f), \tau_\rho)\) to \((B, \pi)\) such that \(\bar{f} \circ \rho = f\).
\end{enumerate}
\end{theorem}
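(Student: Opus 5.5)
The plan is to reduce everything to the first isomorphism theorem for \(MV\)-algebras and then transport the topological structure of \(B\) back to the quotient along the induced bijection. Put \(K=\operatorname{Ker}f=f^{-1}(0)\). This is an ideal of \(A\): it contains \(0\), it is closed under \(\oplus\), and it is downward closed because \(f\) preserves the order, since \(x\le y\) iff \(x^\ast\oplus y=1\).

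\emph{Step 1: the algebraic isomorphism.} Since \(f\) preserves \(\oplus\), \({}^\ast\) and hence \(\ominus\), we have \(f(d(x,y))=d(f(x),f(y))\). In any \(MV\)-algebra the Chang distance separates points, i.e.\ \(d(u,v)=0\) iff \(u=v\) \cite{CD}. Therefore
\[
x\equiv_K y \iff d(x,y)\in K \iff d(f(x),f(y))=0 \iff f(x)=f(y).
\]
So \(\bar f(x/K):=f(x)\) is well defined and injective. It is surjective because \(f\) is, and it is an \(MV\)-homomorphism because the quotient operations are defined on representatives. By construction \(\bar f\circ\rho=f\). Uniqueness of \(\bar f\) is immediate, since any map \(g\) with \(g\circ\rho=f\) is determined on \(A/K=\rho(A)\).

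\emph{Step 2: \(\bar f\) is a homeomorphism for \(\tau_\rho\).} For continuity, let \(U\) be open in \(B\). Then \(\rho^{-1}(\bar f^{-1}(U))=f^{-1}(U)\) is open in \(A\), so \(\bar f^{-1}(U)\in\tau_\rho\). For openness, let \(V\in\tau_\rho\). Since \(\rho\) is surjective, \(\bar f(V)=\bar f(\rho(\rho^{-1}(V)))=f(\rho^{-1}(V))\). The set \(\rho^{-1}(V)\) is open in \(A\) and \(f\) is open, so \(\bar f(V)\) is open in \(B\). Hence \(\bar f\) is a topological isomorphism, which proves (ii).

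\emph{Step 3: continuity of the operations.} On \(A/K\) we can write
\[
\oplus_{A/K}=\bar f^{-1}\circ\oplus_B\circ(\bar f\times\bar f),
\qquad
{}^\ast_{A/K}=\bar f^{-1}\circ{}^\ast_B\circ\bar f .
\]
Each map in these composites is continuous, because \(\bar f\) and \(\bar f^{-1}\) are continuous by Step 2 and \(B\) is a topological \(MV\)-algebra. So \((A/K,\tau_\rho)\) is a topological \(MV\)-algebra, which proves (i). This route avoids any direct argument that \(\rho\times\rho\) is a quotient map, which is the usual difficulty with quotient topologies on products.

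The only genuinely nontrivial point is the openness of \(\bar f\) in Step 2. That is exactly where the hypothesis that \(f\) is open is used, together with the identity \(\bar f(V)=f(\rho^{-1}(V))\). The remaining steps are routine verification.
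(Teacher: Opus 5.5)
Your proof is correct, but it runs in the opposite order from the paper's. The paper only cites this result. It recovers it, in stronger form, from Theorem \ref{lem:quotient-open-general} together with Corollaries \ref{cor:natural-quotient-open} and \ref{cor:1}. The steps there are:
\begin{enumerate}
\item It first proves that the natural map $\rho$ is open, by showing that every saturation $\rho^{-1}(\rho(G))$ of an open set is open. This uses the continuous map $\Theta_v(p,r)=(v\ominus p)\oplus r$ and the identity $(x\wedge y)\oplus(y\ominus x)=y$.
\item It then descends $\oplus$ and ${}^\ast$ to $A/\operatorname{Ker} f$, because $\rho\times\rho$ is an open continuous surjection and hence a quotient map.
\item Only then does it build $\bar f$. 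Continuity comes from $\bar f^{-1}(O)=\rho(f^{-1}(O))$, which needs openness of $\rho$; openness comes from openness of $f$.
\end{enumerate}
Similarly, deducing the statement from Proposition \ref{Pro1} only requires $\rho^{-1}(\rho(V))=f^{-1}(f(V))$ to be open, which follows from $f$ being open and continuous.

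You instead make $\bar f$ a homeomorphism first. You get continuity directly from the definition of $\tau_\rho$ and openness from $\bar f(V)=f(\rho^{-1}(V))$, and then transport the operations of $B$ along $\bar f$. This is shorter and needs no $MV$-specific topological computation. It also avoids any argument that a product of quotient maps is a quotient map, and it yields openness of $\rho=\bar f^{-1}\circ f$ as a by-product.

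Its limitation is that it depends on an ambient topological $MV$-algebra $B$ and an open $f$ with the given kernel. The paper's route shows that $A/I$ with the quotient topology is a topological $MV$-algebra, and that $\rho$ is open, for an \emph{arbitrary} ideal $I$. It also shows that ``open'' can be weakened to ``quotient'' in (ii). These stronger facts are what the later perfectness and three-space results rely on.
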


Clearly, every continuous open map in topological spaces is a quotient map, but the converse is not true. To prove Theorem \ref{Them:GY},  Gan, Luan, Deng and Yang proved the following result:

\begin{proposition}\cite[Proposition 5.1]{GLD}\label{Pro1}
Let \( (A, \tau) \) be a topological \(MV\)-algebra, \( B \) be an MV-algebra and \( f : A \to B \) be a surjective homomorphism from \( A \) onto \( B \) satisfying the condition: \(f^{-1}(f(V)) \in \tau\) holds for each \(V \in \tau\). Then the following statements hold:
\begin{enumerate}
\item[(i)] \( (B, \tau_f) \) is a topological MV-algebra.
\item[(ii)] \( f \) is an open continuous homomorphism from \( (A, \tau) \) onto \( (B, \tau_f) \).
\item[(iii)] If \( (B, \pi) \) is a topological MV-algebra such that \( f \) is an open continuous homomorphism from \( (A, \tau) \) onto \( (B, \pi) \), then \( \tau_f \subseteq \pi \).
\end{enumerate}
\end{proposition}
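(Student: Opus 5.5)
This proposition is cited from \cite{GLD}, but I would prove it directly. The plan is to first establish (ii), then derive (i) from the openness of \(f\), and finally obtain (iii) from the universal property of the quotient topology.

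\textbf{Step (ii).} Continuity of \(f:(A,\tau)\to(B,\tau_f)\) holds by the definition of \(\tau_f\), since \(f^{-1}(U)\in\tau\) for every \(U\in\tau_f\). For openness, take \(V\in\tau\). By hypothesis \(f^{-1}(f(V))\in\tau\), so \(f(V)\in\tau_f\) by definition. Thus \(f\) is open and continuous, and in particular a quotient map.

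\textbf{Step (i).} The key fact is that a product of open surjections is an open surjection. Hence \(f\times f:A\times A\to B\times B\) is open, continuous and surjective, and therefore a quotient map.

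\emph{Continuity of \({}^\ast\).} Let \(U\in\tau_f\). Since \(f(x^\ast)=f(x)^\ast\), we have \(f^{-1}\bigl(({}^\ast)^{-1}(U)\bigr)=({}^\ast)^{-1}\bigl(f^{-1}(U)\bigr)\), which is open in \(A\). By the definition of \(\tau_f\), the set \(({}^\ast)^{-1}(U)\) is open in \(B\).

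\emph{Continuity of \(\oplus\).} Let \(U\in\tau_f\). Since \(f\) is a homomorphism, \((f\times f)^{-1}\bigl(\oplus_B^{-1}(U)\bigr)=\oplus_A^{-1}\bigl(f^{-1}(U)\bigr)\), which is open in \(A\times A\). Because \(f\times f\) is a quotient map, \(\oplus_B^{-1}(U)\) is open in \(B\times B\).

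An alternative for \(\oplus\) avoids quotient-map language. Given \(f(a)\oplus f(b)\in U\), we have \(a\oplus b\in f^{-1}(U)\). Continuity of \(\oplus_A\) gives open sets \(V\ni a\) and \(W\ni b\) with \(V\oplus W\subseteq f^{-1}(U)\). Then \(f(V)\) and \(f(W)\) are open neighbourhoods of \(f(a)\) and \(f(b)\) by Step (ii), and \(f(V)\oplus f(W)=f(V\oplus W)\subseteq U\).

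\textbf{Step (iii).} Suppose \(\pi\) makes \(f\) continuous. Then every \(O\in\pi\) satisfies \(f^{-1}(O)\in\tau\), so \(O\in\tau_f\). Hence \(\pi\subseteq\tau_f\).

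This is the opposite of the inclusion \(\tau_f\subseteq\pi\) stated in (iii). To get that inclusion I would use openness of \(f\) onto \((B,\pi)\). For \(U\in\tau_f\), surjectivity gives \(U=f(f^{-1}(U))\). Since \(f^{-1}(U)\in\tau\) and \(f\) is \(\pi\)-open, \(U\in\pi\). So \(\tau_f\subseteq\pi\), and combined with the previous paragraph the two topologies are actually equal.

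The only delicate point is the openness of \(f\times f\), needed to push continuity of \(\oplus\) down to \(B\). The neighbourhood argument in Step (i) settles it cleanly.
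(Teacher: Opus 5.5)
Your proof is correct, including your observation that (iii) actually yields \(\tau_f=\pi\), since an open continuous surjection onto \((B,\pi)\) is a quotient map. The paper only cites this result, but your argument for (i) is the one it uses in the second half of the proof of Theorem~\ref{lem:quotient-open-general} (\(f\times f\) is an open continuous surjection, hence quotient, and \(\oplus_B\circ(f\times f)=f\circ\oplus_A\)); the one difference is that you get openness in (ii) directly from the saturation hypothesis, whereas that theorem shows the hypothesis holds automatically, using the neighbourhoods \(W(x)\) and the maps \(\Theta_v\).
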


 In fact, we find that every quotient \(MV\)-homomorphism is continuous and open in topological \(MV\)-algebras, which implies that the condition `\(f^{-1}(f(V)) \in \tau\) holds for each \(V \in \tau\)' in Proposition \ref{Pro1} can be dropped. Also, we improve Theorem \ref{Them:GY} as follows:

\begin{theorem}\label{lem:quotient-open-general}
Let \(A\) be a topological \(MV\)-algebra and \(B\) is an \(MV\)-algebra with a topology \(\tau\). If
\( q:A\longrightarrow B\)
is a surjective \(MV\)-homomorphism and \(q\) is a quotient map in the topological sense, then \(q\) is open. Furthermore, \(B\) with the topology \(\tau\) is a topological \(MV\)-algebra.
\end{theorem}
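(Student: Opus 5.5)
Since \(q\) is a quotient map, openness of \(q\) reduces to a saturation statement. For every open \(V\subseteq A\) we need \(q^{-1}(q(V))\) to be open in \(A\). Then \(q(V)\) is open in \(B\) by the definition of a quotient map. Once openness holds, the hypothesis of Proposition \ref{Pro1} is satisfied. So \((B,\tau_q)\) is a topological \(MV\)-algebra, and \(\tau_q=\tau\) because \(q\) is quotient. Thus the whole theorem rests on computing the saturation \(q^{-1}(q(V))\).

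Put \(I=\ker q=q^{-1}(0)\). Then \(q(x)=q(y)\) iff \(d(x,y)\in I\), so
\[
q^{-1}(q(V))=\{y\in A:\exists x\in V,\ d(x,y)\in I\}.
\]
The plan is to write this set as a union of open sets built from \(V\) and elements of \(I\). The natural candidate for "translation" is the map
\[
\phi_{u,v}(x)=(x\oplus u)\ominus v,\qquad u,v\in I.
\]
I claim \(q^{-1}(q(V))=\bigcup_{u,v\in I}\phi_{u,v}^{-1}(V)\), which reduces to a two-way check.

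If \(y\in\phi_{u,v}^{-1}(V)\), set \(x=(y\oplus u)\ominus v\in V\). Since \(q(u)=q(v)=0\), we get \(q(x)=(q(y)\oplus 0)\odot 1=q(y)\). Hence \(y\) lies in the saturation.

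For the converse, suppose \(x\in V\) and \(d(x,y)\in I\). Take \(u=x\ominus y\) and \(v=y\ominus x\), both in \(I\) because they lie below \(d(x,y)\). We want \((y\oplus(x\ominus y))\ominus(y\ominus x)=x\). Now \(y\oplus(x\ominus y)=x\vee y\), and \((x\vee y)\ominus(y\ominus x)=x\vee y\ominus\big((x\vee y)\ominus x\big)\), using \(y\ominus x=(x\vee y)\ominus x\). By Lemma \ref{Lem:1}(2) this equals \((x\vee y)\wedge x=x\).

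The identity \(y\ominus x=(x\vee y)\ominus x\) is the one step needing care. I would verify it in \([0,1]\) and appeal to Chang's completeness theorem, or derive it from the lattice identities directly. With it, \(y=\phi_{u,v}^{-1}\)-preimage of \(x\), i.e. \(y\in\phi_{u,v}^{-1}(V)\).

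Each \(\phi_{u,v}\) is continuous, since \(\oplus\) and \(\ominus\) are continuous. So each \(\phi_{u,v}^{-1}(V)\) is open, and the saturation is open as a union of open sets. This gives openness of \(q\), and Proposition \ref{Pro1} finishes the proof.

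I expect the main obstacle to be the reconstruction identity \(\phi_{x\ominus y,\,y\ominus x}(y)=x\) in the converse direction. The group trick \(y=x\cdot(x^{-1}y)\) is unavailable here, so this lattice identity is exactly what substitutes for translation. If the direct derivation stalls, I would fall back on checking the identity in linearly ordered \(MV\)-algebras, via subdirect representation, where the cases \(x\le y\) and \(y\le x\) are immediate.
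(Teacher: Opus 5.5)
Your proof is correct, but it reaches openness by a different route from the paper.

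\textbf{The paper's route.} The argument is local and never mentions the kernel. For $x\in q^{-1}(q(G))$ with a witness $v\in G$, $q(v)=q(x)$, continuity of $\Theta_v(p,r)=(v\ominus p)\oplus r$ at $(0,0)$ gives a neighbourhood $W$ of $0$ with $\Theta_v(W\times W)\subseteq G$. Then $W(x)\subseteq q^{-1}(q(G))$, because $w=\Theta_v(x\ominus y,y\ominus x)\in G$ satisfies $q(w)=q(y)$.

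\textbf{Your route.} You describe the saturation globally as $\bigcup_{u,v\in\ker q}\phi_{u,v}^{-1}(V)$. This is an $MV$-analogue of $VH=\bigcup_{h\in H}Vh$, with the continuous maps $\phi_{u,v}$ playing the role of translations, and both inclusions are right. The identity you flagged needs neither the completeness theorem nor subdirect representation: distributivity of $\odot$ over $\vee$ gives $(x\vee y)\ominus x=x^\ast\odot(x\vee y)=(x^\ast\odot x)\vee(x^\ast\odot y)=y\ominus x$.

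You can avoid the identity altogether by reversing the order of operations. Take $\psi_{i,j}(y)=(y\ominus i)\oplus j$ with $i=y\ominus x$ and $j=x\ominus y$. Then Lemma \ref{Lem:1}(1),(2) give $\psi_{i,j}(y)=(y\wedge x)\oplus(x\ominus y)=x$ at once. This is exactly the fibre parametrization of Proposition \ref{lem:class-parametrization}, which the paper introduces only later, for Theorem \ref{thm:compact-ideal-closed-quotient}.

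\textbf{What each buys.} Your approach derives both openness of $q$ and, for compact kernels, its closedness from a single description of the fibres. The paper's local argument needs no identification of fibres via the kernel.

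\textbf{Second assertion.} Citing Proposition \ref{Pro1} is legitimate, since its hypothesis is precisely the saturation condition you establish, and $\tau_q=\tau$ because $q$ is a quotient map. The paper instead gives a self-contained argument: $\oplus_B$ and ${}^\ast_B$ factor through $q\times q$ and $q$, which are open continuous surjections and hence quotient maps.
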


\begin{proof}
Let \(G\subseteq A\) be open.  Since \(q\) is a quotient map, it is enough to prove that
\[
        q^{-1}(q(G))
\]
is open in \(A\).

Take \(x\in q^{-1}(q(G))\).  Then there exists \(v\in G\) such that
\[
        q(x)=q(v).
\]
Consider the map \(\Theta_v:A\times A\longrightarrow A \) defined by
\[
        \Theta_v(p,r)=(v\ominus p)\oplus r \quad \text{for each~} (p,r)\in A\times A.
\]
Since $A$ is a topological \(MV\)-algebra, the map \(\Theta_v\) is continuous. Hence, there exists a neighbourhood \(W\) of \(0\) such that,
\[
        \Theta_v(W\times W)\subseteq G,
\]
because \(\Theta_v(0,0)=v\in G\) and \(G\) is open.

We claim that
\[
        W(x)\subseteq q^{-1}(q(G)).
\]
Let \(y\in W(x)\).  Then
\[
        x\ominus y\in W,
        \qquad
        y\ominus x\in W.
\]
Put
\[
        w=\Theta_v(x\ominus y,y\ominus x)=(v\ominus(x\ominus y))\oplus(y\ominus x).
\]
By the choice of \(W\), we have \(w=\Theta_v(x\ominus y,y\ominus x)\in \Theta_v(W,W)\subseteq G\).  We show that \(q(w)=q(y)\).  Since \(q\) is an \(MV\)-homomorphism and \(q(v)=q(x)\),
\begin{align*}
q(w)
&=\big(q(v)\ominus(q(x)\ominus q(y))\big)\oplus(q(y)\ominus q(x))\\
&=\big(q(x)\ominus(q(x)\ominus q(y))\big)\oplus(q(y)\ominus q(x))\\
&=(q(x)\wedge q(y))\oplus(q(y)\ominus q(x))\\
&=q(y),
\end{align*}
where the last equality follows from (1) in Lemma \ref{Lem:1}.  Hence \(q(y)=q(w)\in q(G)\), so \(y\in q^{-1}(q(G))\).  Therefore \(W(x)\subseteq q^{-1}(q(G))\).

Since every point of \(q^{-1}(q(G))\) has a neighbourhood contained in \(q^{-1}(q(G))\), this saturation is open.  Hence \(q(G)\) is open by the quotient property of \(q\).  Thus \(q\) is an open map.

It remains to show that \(B\) with the topology \(\tau\) is a topological \(MV\)-algebra.
Since \(f\) is an open continuous
surjection, \(f\times f:A\times A\to B\times B\) is also an open continuous surjection; in particular, it is a
quotient map.

The operation \(\oplus_{B}\) on \(B\) satisfies
\[
        \oplus_{B}\circ(f\times f)=f\circ\oplus_A.
\]
The right-hand side is continuous and \(f\times f\) is open and quotient. Hence \(\oplus_{B}\) is continuous. Similarly, the involution on \(B\) satisfies
\[
        {}^\ast_{B}\circ f=f\circ{}^\ast_A,
\]
and since \(f\) is open and quotient, \({}^\ast_{B}\) is continuous. Therefore \(B\) is a topological \(MV\)-algebra.
\end{proof}

The following Example shows that the condition `\(A\) is a topological \(MV\)-algebra' in Theorem \ref{lem:quotient-open-general} can not weaken to `\(A\) is a paratopological \(MV\)-algebra'.  An \(MV\)-algebra \(A\)
with a topology is called a {\it paratopological \(MV\)-algebra } if the operator \(\oplus:A\times A\rightarrow A\) is continuous. Clearly, every topological \(MV\)-algebra is a paratopological \(MV\)-algebra.

\begin{exa}
Let \(B_4 = \{0, a, b, 1\}\) be the four-element Boolean algebra, where \(0\) and \(1\) are the bottom and top elements, and \(a,b\) are complementary. The MV-algebra operations are defined by
\[
x \oplus y = x \vee y,\qquad x^* = \neg x.
\]
Define a topology \(\tau\) on \(B_4\) by taking as a basis the sets
\[
\{\{0\},\quad \{1\},\quad \{a,1\},\quad \{b,1\}\}.
\]
Equivalently, the open sets are all unions of these basic sets. Then \((B_4,\tau)\) is a paratopological MV-algebra but not a topological MV-algebra. Take the ideal \(I=\{0,a\}\) and Consider the natural quotient
map \(\pi:B_4\rightarrow B_4/I\). Then \(\pi\) is not open.
\end{exa}

\begin{proof}
We first verify that the operation \(\oplus\) is jointly continuous. The product topology on \(B_4 \times B_4\) has a basis of sets \(U \times V\) with \(U,V \in \tau\). It is enough to check that for each basic open set \(W\) of \(B_4\), the preimage \(\oplus^{-1}(W)\) is open in the product topology.

The open sets of \(\tau\) are exactly
\[
\{\emptyset,\ \{0\},\ \{1\},\ \{a,1\},\ \{b,1\},\ \{0,1\},\ \{0,a,1\},\ \{0,b,1\},\ \{a,b,1\},\ B_4\}
\]

\begin{itemize}
\item For \(W = \{0\}\), we have \(\oplus^{-1}(\{0\}) = \{(0,0)\}\), which is \(\{0\}\times\{0\}\), a basic open set.
\item For \(W = \{1\}\), the pairs with \(x \oplus y = 1\) are those with \(x=1\) or \(y=1\), plus \((a,b)\) and \((b,a)\). Thus
\[
\oplus^{-1}(\{1\}) = \bigl(\{1\}\times B_4\bigr) \cup \bigl(B_4\times\{1\}\bigr) \cup \{(a,b),(b,a)\}.
\]
Every point in this set has a basic open neighbourhood contained in it:
\[
(1,y) \in \{1\}\times B_4,\quad (x,1)\in B_4\times\{1\},\quad (a,b)\in \{a,1\}\times\{b,1\},\quad (b,a)\in \{b,1\}\times\{a,1\}.
\]

Hence the preimage is open.
\item For \(W = \{a,1\}\), the pairs with join \(a\) are \((a,0),(0,a),(a,a)\), and those with join \(1\) are as above. Each point has a suitable neighbourhood:
\[
(a,0)\in \{a,1\}\times\{0\},\quad (0,a)\in \{0\}\times\{a,1\},\quad (a,a)\in \{a,1\}\times\{a,1\},
\]
and the cases with \(1\) or \((a,b),(b,a)\) are handled as before.
\item For \(W = \{b,1\}\), the argument is symmetric (interchange \(a\) and \(b\)).
\end{itemize}

Therefore, \(\oplus\) is jointly continuous.

Now consider the complement map \(x \mapsto x^*\). Take the open set \(\{a,1\}\). Its preimage under \(^*\) is
\[
\{x \in B_4 : x^* \in \{a,1\}\} = \{0, b\},
\]
because \(0^*=1,\ a^*=b,\ b^*=a,\ 1^*=0\).

Clearly, the set \(\{0,b\}\) is not open in \(\tau\), so the complement map is not continuous.
Thus \((B_4,\tau)\) is \emph{not} a topological MV-algebra.

The quotient algebra has two equivalence classes: \(0/I = I\) and \(1/I = \{b,1\}\). The quotient topology is \(\tau_\pi = \{U \subseteq B_4/I \mid \pi^{-1}(U) \in \tau\}\). Since \(I = \pi^{-1}(\{0/I\})\) is not open, \(\{0/I\}\) is not open. However, taking the open set \(U = \{0\} \in \tau\), we have \(\pi(U) = \{0/I\}\), which is not open. Therefore \(\pi\) is not an open mapping.
\end{proof}

\begin{corollary}\label{cor:natural-quotient-open}
Let \(A\) be a topological \(MV\)-algebra and \(I\) an ideal of \(A\). Let
\[ q:A\longrightarrow A/I\]
be the natural homomorphism, where \(A/I\) is endowed with the quotient topology. Then \(q\) is a continuous and open map and \(A/I\) is a topological \(MV\)-algebra.
\end{corollary}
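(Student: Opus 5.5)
This corollary will follow directly from Theorem \ref{lem:quotient-open-general} with \(B=A/I\), \(\tau=\tau_q\) the quotient topology, and \(q=\pi_I\) the natural homomorphism. The plan is to check that this \(q\) meets the hypotheses of that theorem, and then read off its conclusions.

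\textbf{Hypotheses.} First, \(q\) is a surjective \(MV\)-homomorphism. This is the standard algebraic fact recalled in Section \ref{sec:preliminaries}: \(\equiv_I\) is a congruence, so the operations on \(A/I\) are well defined, \(x\mapsto x/I\) preserves \(0\), \(\oplus\) and \({}^\ast\), and every class \(x/I\) is the image of \(x\). Second, \(q\) is a quotient map in the topological sense. By the definition of \(\tau_q\), a set \(O\subseteq A/I\) is open if and only if \(q^{-1}(O)\) is open in \(A\). The same definition shows that \(q\) is continuous, since preimages of open sets are open by construction.

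\textbf{Conclusions.} Theorem \ref{lem:quotient-open-general} then yields that \(q\) is open and that \((A/I,\tau_q)\) is a topological \(MV\)-algebra. Together with the continuity noted above, this gives every assertion of the corollary.

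\textbf{Main obstacle.} There is essentially none, since all the work was done in the theorem. The one point to be careful about is the passage from ``open and continuous'' to ``\(q\times q\) is a quotient map'', which the theorem uses to show that \(\oplus_{A/I}\) is continuous. This holds because a product of open continuous surjections is an open continuous surjection, hence a quotient map. The identity \(\oplus_{A/I}\circ(q\times q)=q\circ\oplus_A\) then gives continuity of \(\oplus_{A/I}\), and the identity \({}^\ast_{A/I}\circ q=q\circ{}^\ast_A\) gives continuity of \({}^\ast_{A/I}\) in the same way.
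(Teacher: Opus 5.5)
Your proposal is correct and takes the same approach as the paper: verify that the natural map is a surjective \(MV\)-homomorphism and, by the definition of the quotient topology, a continuous quotient map, then invoke Theorem \ref{lem:quotient-open-general}. Your remark that \(q\times q\) is a quotient map (used to get continuity of \(\oplus_{A/I}\)) only repeats the argument already made inside that theorem, so it is not needed as a separate step.
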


\begin{proof}
The map \(q\) is continuous and quotient by the definition of the quotient topology. It is a
surjective \(MV\)-homomorphism. The result follows from Theorem \ref{lem:quotient-open-general}.
\end{proof}

\begin{corollary}
Let \((A, \tau)\) be a topological \(MV\)-algebra, \(B\) be an \(MV\)-algebra and \(f : A \to B\) be a surjective \(MV\)-homomorphism from \(A\) onto \(B\). Then the quotient space \((B, \tau_f)\) is discrete if and only if
\(\operatorname{Ker}(f) \in \tau\), where \(\operatorname{Ker}(f) = f^{-1}(0) = \{x \in A \mid f(x) = 0\}\).
\end{corollary}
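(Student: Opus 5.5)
The plan is to combine Theorem \ref{lem:quotient-open-general} with the observation that in a quotient space a single point is open exactly when its fibre is open. By Theorem \ref{lem:quotient-open-general}, \(f:(A,\tau)\to(B,\tau_f)\) is a continuous open surjective homomorphism and \((B,\tau_f)\) is a topological \(MV\)-algebra. Throughout I use that \(f^{-1}(\{0\})=\operatorname{Ker}(f)\), so by the definition of \(\tau_f\),
\[
\{0\}\in\tau_f \iff \operatorname{Ker}(f)\in\tau .
\]

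\emph{Necessity.} If \((B,\tau_f)\) is discrete, then \(\{0\}\) is open in \(B\), and the displayed equivalence gives \(\operatorname{Ker}(f)\in\tau\).

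\emph{Sufficiency.} Suppose \(\operatorname{Ker}(f)\in\tau\). It suffices to show that \(\{b\}\in\tau_f\) for every \(b\in B\), that is, that every fibre \(f^{-1}(b)\) is open in \(A\). There are two routes.

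The first route translates \(0\) to \(b\) inside \(B\). Fix \(a\) with \(f(a)=b\), and put \(U=\{0\}\), which is a neighbourhood of \(0\) in the topological \(MV\)-algebra \(B\). By the remark in Section \ref{sec:preliminaries}, the set
\[
U(b)=\{y\in B: b\ominus y=0,\ y\ominus b=0\}
\]
is a neighbourhood of \(b\). In an \(MV\)-algebra, \(x\ominus y=0\) holds iff \(x\le y\). So both conditions together say \(b\le y\) and \(y\le b\), i.e. \(y=b\). Hence \(U(b)=\{b\}\) is open, and since \(b\) was arbitrary, \(B\) is discrete.

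The second route stays in \(A\). Define \(\varphi(x)=(a\ominus x)\oplus(x\ominus a)\), which is continuous, and note that
\[
f^{-1}(b)=\{x: d(x,a)\in\operatorname{Ker}(f)\}=\varphi^{-1}(\operatorname{Ker}(f)),
\]
which is open.

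The only nontrivial point, which I expect to be the main obstacle, is the identification \(U(b)=\{b\}\), equivalently that \(d(x,a)\in\operatorname{Ker} f\) characterizes the fibre. It rests on the standard facts that \(x\ominus y=0\) iff \(x\le y\), and that \(d(x,y)=0\) iff \(x=y\). I would cite these from \cite{CD} rather than reprove them.
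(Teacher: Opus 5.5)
Your proof is correct. The necessity direction and your first route for sufficiency are the paper's argument. The paper also notes that $\{0\}\in\tau_f$, uses Theorem~\ref{lem:quotient-open-general} to make $(B,\tau_f)$ a topological $MV$-algebra, and then cites Hoo's Proposition~3.3 (a topological $MV$-algebra in which $\{0\}$ is open is discrete). Your computation $U(b)=\{b\}$ for $U=\{0\}$, via $x\ominus y=0\iff x\le y$, is a self-contained proof of that cited fact.

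Your second route is genuinely different and more elementary. It never uses Theorem~\ref{lem:quotient-open-general}: neither the openness of $f$ nor the fact that $(B,\tau_f)$ is a topological $MV$-algebra. It needs only three things: continuity of the Chang distance on $A$, the identity $f(d(x,a))=d(f(x),f(a))$, and $d(u,v)=0\iff u=v$ in $B$. You should state the homomorphism identity explicitly, since $f^{-1}(b)=\varphi^{-1}(\operatorname{Ker} f)$ rests on it; it is immediate because $f$ preserves the term operations $\ominus$ and $\oplus$.

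This route is the open-set analogue of the paper's own argument in Proposition~\ref{prop:T1-iff-closed-ideal}, where congruence classes of a closed ideal are shown to be closed. It shows that the corollary depends only on the topology of $A$ and the congruence, not on the paper's main quotient theorem. The paper's version is shorter on the page because it relies on the main theorem and an external citation.
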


\begin{proof}
Assume that \(\operatorname{Ker}(f) \in \tau\). Then \(\{0\}\) is open in \((B,\tau_f)\) because \(f :(A, \tau) \to (B,\tau_f)\) is a quotient mapping. Furthermore, by Theorem \ref{lem:quotient-open-general}, \((B,\tau_f)\) is a topological \(MV\)-algebra. Then \((B, \tau_f)\) is discrete by the fact that topological \(MV\)-algebra \(C\) is discrete if the set \(\{0\}\) is open in \(C\) \cite[Proposition 3.3]{Hoo1997}.

Assume that \((B, \tau_f)\) is discrete. Then \(\{0\} \in \tau_f\) , and so \(f^{-1}(0)=\operatorname{Ker}(f)\in \tau\).
\end{proof}

A mapping \( f : A \to B \) from one topological MV-algebra to another is a \textbf{topological \(MV\)-isomorphism} if \( f \) is both an \(MV\)-isomorphism and a homeomorphism \cite[Definition 3.15]{Hoo1997}.

\begin{corollary}\label{cor:1}
Let \(f:A\longrightarrow B\) be a surjective continuous \(MV\)-homomorphism between topological \(MV\)-algebras, and let \(\pi:A\rightarrow A/\operatorname{Ker}f\) is the natural \(MV\)-homomorphism, where \(A/\operatorname{Ker}f\) endowed with the natural quotient topology. Then
 there is a unique continuous \(MV\)-isomorphism
  \[q:A/\operatorname{Ker}f\longrightarrow B\] such that \(f = q \circ \pi\).
Furthermore, if \(f\) is also a quotient map in the topological sense, then \(q\) is a topological \(MV\)-isomorphism.
\end{corollary}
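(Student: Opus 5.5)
The plan is to construct \(q\) algebraically first, then deal with the topology. Write \(K=\operatorname{Ker}f\). This is an ideal of \(A\). Recall the standard fact (from \cite{CD}) that for an \(MV\)-homomorphism \(f\), \(f(d(x,y))=d(f(x),f(y))\), and that \(d(u,v)=0\) iff \(u=v\) in any \(MV\)-algebra. Hence
\[
x\equiv_K y \iff d(x,y)\in K \iff d(f(x),f(y))=0 \iff f(x)=f(y).
\]
So the rule \(q(x/K):=f(x)\) is well defined and injective. It is surjective because \(f\) is. It is an \(MV\)-homomorphism because \(\pi\) and \(f\) are and \(\pi\) is surjective: for instance
\[
q(x/K\oplus y/K)=q((x\oplus y)/K)=f(x)\oplus f(y).
\]
Uniqueness is immediate: any \(q'\) with \(q'\circ\pi=f\) agrees with \(q\) on \(\pi(A)=A/K\).

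Continuity of \(q\) comes from the universal property of the quotient topology. Let \(O\subseteq B\) be open. Then \(\pi^{-1}(q^{-1}(O))=f^{-1}(O)\) is open in \(A\), since \(f\) is continuous. By the definition of \(\tau_\pi\), this means \(q^{-1}(O)\) is open in \(A/K\).

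For the last assertion, suppose \(f\) is also a topological quotient map. It remains to show that \(q^{-1}\) is continuous, i.e.\ that \(q\) is open. Take \(U\) open in \(A/K\). Then \(\pi^{-1}(U)\) is open in \(A\). Since \(q\) is a bijection with \(f=q\circ\pi\), we have
\[
f^{-1}(q(U))=\pi^{-1}(q^{-1}(q(U)))=\pi^{-1}(U),
\]
which is open. Because \(f\) is a quotient map, \(q(U)\) is open in \(B\). Thus \(q\) is a homeomorphism and hence a topological \(MV\)-isomorphism. Alternatively, Theorem \ref{lem:quotient-open-general} shows \(f\) is open, and then \(q(U)=f(\pi^{-1}(U))\) is open directly. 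Corollary \ref{cor:natural-quotient-open} guarantees that \(A/K\) is a topological \(MV\)-algebra, so the notion of topological \(MV\)-isomorphism applies.

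The only step needing care is the identity \(x\equiv_K y\iff f(x)=f(y)\). I will justify it from \(f(d(x,y))=d(f(x),f(y))\), which holds because \(d\) is a term built from \(\oplus\) and \(^\ast\), together with the fact \(d(u,v)=0\iff u=v\), which I will cite from \cite{CD}. Everything else is routine.
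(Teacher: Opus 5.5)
Your proof is correct. The algebraic part matches the paper's: the paper simply cites the algebraic first isomorphism theorem, and you reprove it using \(f(d(x,y))=d(f(x),f(y))\) and \(d(u,v)=0\iff u=v\).

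The topological part takes a different, more elementary route.

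\textbf{Paper's route.} It obtains continuity from the identity \(q^{-1}(O)=\pi(f^{-1}(O))\), which needs \(\pi\) to be open (Corollary~\ref{cor:natural-quotient-open}). It obtains openness of \(q\) from \(q(U)=f(\pi^{-1}(U))\), which needs \(f\) to be open; that is first deduced from Theorem~\ref{lem:quotient-open-general}.

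\textbf{Your route.} You use only the universal properties of quotient maps. Since \(\pi^{-1}(q^{-1}(O))=f^{-1}(O)\) is open, \(q^{-1}(O)\) is open by the definition of \(\tau_\pi\). Since \(f^{-1}(q(U))=\pi^{-1}(U)\) is open (using injectivity of \(q\)), \(q(U)\) is open because \(f\) is a quotient map.

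\textbf{What each buys.} Your argument shows that, once the algebra is settled, the statement is a purely topological fact. It holds for any continuous surjection whose fibres coincide with those of a quotient map, and it does not depend on the \(MV\)-specific open-mapping theorem at all. The paper's argument is shorter given the results already established, and it presents the corollary as an application of that theorem through the pair of formulas for \(q^{-1}(O)\) and \(q(U)\).

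Your one remaining appeal to Corollary~\ref{cor:natural-quotient-open}, to know that \(A/\operatorname{Ker}f\) is a topological \(MV\)-algebra, could be dropped in the quotient case. A homeomorphic \(MV\)-isomorphism onto \(B\) already transports continuity of \(\oplus\) and \({}^\ast\) back to \(A/\operatorname{Ker}f\).
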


\begin{proof}

Define \(q : A/\operatorname{Ker}f \longrightarrow B\) by
\[
q(x/\operatorname{Ker}f) = f(x) \quad \text{for any } x \in A.
\]
This is the usual algebraic first isomorphism theorem \cite[Theorem 1.2.8]{CD}.

Continuity follows because, for every open set \(O\) in \(B\),
\[q^{-1}(O)=\pi(f^{-1}(O)),
\]
and \(f^{-1}(O)\) is open while \(\pi\) is open by Corollary \ref{cor:natural-quotient-open}. If \(f\) is a quotient map, Theorem \ref{lem:quotient-open-general} implies
that \(f\) is open, and the same identity gives openness of \(q\).
\end{proof}


\section{Perfect quotient \(MV\)-homomorphisms}\label{sec:compact-ideal-closed}

We now prove the second main theorem.  We begin with two elementary lemmas.

\begin{lemma}\label{lem:compact-parameter-closed}
Let \(X,Y\) be topological spaces, let \(K\) be compact, and let \( \Phi:X\times K\longrightarrow Y\)
be continuous.  If \(F\subseteq Y\) is closed, then \(  S_F=\{x\in X:\Phi(\{x\}\times K)\cap F\neq\emptyset\}\)
is closed in \(X\).
\end{lemma}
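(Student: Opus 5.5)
We show that the complement \(X\setminus S_F\) is open; this is the tube lemma argument. Fix \(x_0\in X\setminus S_F\), so \(\Phi(x_0,k)\notin F\) for every \(k\in K\). Then \(\Phi(\{x_0\}\times K)\subseteq Y\setminus F\), which is open. By continuity of \(\Phi\), the preimage \(O=\Phi^{-1}(Y\setminus F)\) is an open subset of \(X\times K\) containing the slice \(\{x_0\}\times K\).

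For each \(k\in K\), the point \((x_0,k)\) lies in \(O\). Hence there are open sets \(U_k\ni x_0\) in \(X\) and \(V_k\ni k\) in \(K\) with \(U_k\times V_k\subseteq O\). The family \(\{V_k\}_{k\in K}\) covers \(K\). By compactness we pick finitely many \(k_1,\dots,k_n\) with \(K=V_{k_1}\cup\dots\cup V_{k_n}\) and put
\[
U=U_{k_1}\cap\dots\cap U_{k_n}.
\]
This is an open neighbourhood of \(x_0\), and \(U\times K\subseteq\bigcup_i U_{k_i}\times V_{k_i}\subseteq O\). The degenerate case \(K=\emptyset\) is trivial, since then \(S_F=\emptyset\).

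Therefore, for every \(x\in U\) and \(k\in K\), we have \(\Phi(x,k)\notin F\), so \(x\notin S_F\). Thus \(U\subseteq X\setminus S_F\), and \(X\setminus S_F\) is open, i.e. \(S_F\) is closed.

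The only substantive step is the finite-subcover (tube lemma) argument. No separation axioms are used, which matters because the paper assumes none.
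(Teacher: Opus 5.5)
Your proposal is correct and follows essentially the same route as the paper: fix $x_0\notin S_F$, use continuity of $\Phi$ at each $(x_0,k)$ to obtain product neighbourhoods $U_k\times V_k$ mapped into $Y\setminus F$, extract a finite subcover of $K$, and intersect the corresponding $U_{k_i}$. Your separate remark on the degenerate case $K=\emptyset$ is a harmless extra that the paper omits.
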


\begin{proof}
Let \(x_0\notin S_F\).  Then
\[
        \Phi(\{x_0\}\times K)\subseteq Y\setminus F.
\]
For each \(k\in K\), the point \(\Phi(x_0,k)\) belongs to the open set \(Y\setminus F\).  By continuity of \(\Phi\), there exist neighbourhoods \(U_k\) of \(x_0\) in \(X\) and \(V_k\) of \(k\) in \(K\) such that
\[
        \Phi(U_k\times V_k)\subseteq Y\setminus F.
\]
The family \(\{V_k:k\in K\}\) covers \(K\).  Since \(K\) is compact, choose \(k_1,\ldots,k_n\) such that
\[
        K\subseteq V_{k_1}\cup\cdots\cup V_{k_n}.
\]
Put
\[
        U=U_{k_1}\cap\cdots\cap U_{k_n}.
\]
Then \(U\) is a neighbourhood of \(x_0\), and
\[
        \Phi(U\times K)\subseteq Y\setminus F.
\]
Hence \(U\cap S_F=\emptyset\).  Thus \(X\setminus S_F\) is open, so \(S_F\) is closed.
\end{proof}

\begin{lemma}\label{Lem:2}
Let $A$ be an MV-algebra. For any $x,y,z\in A$, if \(x\leq y\), then \(z\ominus y\leq z\ominus x\) holds.
\end{lemma}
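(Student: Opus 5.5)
We will derive Lemma \ref{Lem:2} by combining the antitonicity of the involution with the monotonicity of \(\odot\), both recorded in Lemma \ref{Lem:1}. Recall that \(z\ominus y=z\odot y^\ast\) and \(z\ominus x=z\odot x^\ast\) by definition. The plan is therefore to compare \(y^\ast\) with \(x^\ast\) first, and then multiply by \(z\).

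First, we use Lemma \ref{Lem:1}(5). From \(x\leq y\) it gives \(y^\ast\leq x^\ast\). Second, we apply Lemma \ref{Lem:1}(6) to the inequality \(y^\ast\leq x^\ast\), multiplying both sides by \(z\). This yields
\[
        y^\ast\odot z\leq x^\ast\odot z .
\]
Third, since \(\odot\) is commutative, the left side equals \(z\odot y^\ast=z\ominus y\) and the right side equals \(z\odot x^\ast=z\ominus x\). Commutativity holds because \(\oplus\) is commutative and \(x\odot y=(x^\ast\oplus y^\ast)^\ast\). Hence \(z\ominus y\leq z\ominus x\), as required.

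No step is a real obstacle. The only point to check is that Lemma \ref{Lem:1}(6) is stated with \(z\) on the right, so commutativity of \(\odot\) must be invoked to rewrite the terms. If one wanted to avoid citing (6), one could verify it directly. Take \(x\leq y\) in the form \(y=x\vee y=x\oplus(y\ominus x)\). Then use associativity and commutativity of \(\oplus\) to write \(y\oplus z=(x\oplus z)\oplus(y\ominus x)\geq x\oplus z\). Dualizing through \({}^\ast\) gives the \(\odot\) version, but citing Lemma \ref{Lem:1}(6) suffices.
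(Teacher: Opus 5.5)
Your proof is correct and is the paper's argument: Lemma~\ref{Lem:1}(5) turns \(x\leq y\) into \(y^\ast\leq x^\ast\), and Lemma~\ref{Lem:1}(6) then gives \(z\odot y^\ast\leq z\odot x^\ast\), i.e.\ \(z\ominus y\leq z\ominus x\). Your explicit appeal to commutativity of \(\odot\) to match the form of (6) is a detail the paper leaves implicit.
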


\begin{proof}
If \(x\leq y\), then \(y^\ast\leq x^\ast\) by (5) in Lemma \ref{Lem:1}. Hence, by  (6) in Lemma \ref{Lem:1} we obtain that \(z\odot y^\ast\leq z\odot x^\ast \), that is, \(z\ominus y\leq z\ominus x\).
\end{proof}

\begin{proposition}\label{lem:class-parametrization}
Let \(A\) be an \(MV\)-algebra and \(I\) an ideal of \(A\).  Then \( x/I=\{(x\ominus i)\oplus j:i,j\in I\}\) holds for every \(x\in A\).
\end{proposition}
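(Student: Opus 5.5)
We prove the two inclusions separately, working in \(A/I\) for one and constructing \(i,j\) explicitly for the other.

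\emph{The inclusion \(\supseteq\).} Let \(i,j\in I\) and \(w=(x\ominus i)\oplus j\). Since \(i\in I\), we have \(i/I=0/I\), so \(i^\ast/I=1/I\). The natural homomorphism preserves \(\odot\) and \(\oplus\), hence
\[
w/I=\big((x/I)\odot(i^\ast/I)\big)\oplus (j/I)=(x/I)\odot 1\oplus 0=x/I .
\]
Therefore \(w\in x/I\). This is the same homomorphism computation used in the proof of Theorem \ref{lem:quotient-open-general}.

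\emph{The inclusion \(\subseteq\).} Let \(y\in x/I\), so that \(d(x,y)=(x\ominus y)\oplus(y\ominus x)\in I\). Because \(I\) is downward closed and \(x\ominus y,\,y\ominus x\le d(x,y)\), both \(x\ominus y\) and \(y\ominus x\) lie in \(I\). Put \(i=x\ominus y\) and \(j=y\ominus x\). Then
\[
(x\ominus i)\oplus j=(x\ominus(x\ominus y))\oplus(y\ominus x)=(x\wedge y)\oplus(y\ominus x)
\]
by Lemma \ref{Lem:1}(2). The meet is commutative, so \(x\wedge y=y\wedge x\), and Lemma \ref{Lem:1}(1) applied with the roles of \(x\) and \(y\) swapped gives \((y\wedge x)\oplus(y\ominus x)=y\). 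Hence \(y=(x\ominus i)\oplus j\) with \(i,j\in I\).

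There is no real obstacle here; this is the algebraic identity already used in Theorem \ref{lem:quotient-open-general}. The one point needing care is the commutativity step \(x\wedge y=y\wedge x\). The paper defines \(\wedge\) by the formula \(x\odot(x^\ast\oplus y)\), so commutativity is not visible from the formula itself; it holds because \(\wedge\) is the lattice meet for the natural order. With that, the \(\subseteq\) direction reduces to Lemma \ref{Lem:1}(1),(2) plus the downward closure of \(I\).
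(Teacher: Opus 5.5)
Your proof is correct. For \(\subseteq\) you follow the paper: the same choice \(i=x\ominus y\), \(j=y\ominus x\) and the same use of Lemma \ref{Lem:1}(1),(2). You are also more careful than the paper there. The paper passes from \(y\in x/I\) to \(x\ominus y,\,y\ominus x\in I\) without mentioning downward closure. It also cites Lemma \ref{Lem:1}(1) for \((x\wedge y)\oplus(y\ominus x)=y\) without noting the swap of roles and the commutativity of \(\wedge\), both of which you make explicit.

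The real difference is in \(\supseteq\). The paper stays inside \(A\) and argues with the order.
\begin{itemize}
\item From \(x\ominus i\le y\) it gets \(x\ominus y\le x\ominus(x\ominus i)=x\wedge i\le i\) via Lemma \ref{Lem:2}.
\item From \(y\le x\oplus j\) it gets \(y\ominus x\le (x\oplus j)\ominus x\le j\) via Lemma \ref{Lem:1}(4),(6).
\item It then uses that \(I\) is an ideal.
\end{itemize}
You instead work in \(A/I\). Since \(d(i,0)=i\in I\), you get \(i/I=j/I=0/I\). The congruence property of \(\equiv_I\) then gives \(((x\ominus i)\oplus j)/I=((x/I)\odot(1/I))\oplus(0/I)=x/I\) in one line.

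Your route is shorter and more conceptual, and it matches the homomorphism computation in the proof of Theorem \ref{lem:quotient-open-general}. Its cost is that it relies on the cited fact that \(\equiv_I\) is a congruence, which is itself proved by order estimates of this kind. The paper's route is self-contained. It also gives sharper information, namely \(x\ominus y\le i\) and \(y\ominus x\le j\), hence \(d(x,(x\ominus i)\oplus j)\le i\oplus j\).
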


\begin{proof}
Let \(y\in x/I\).  Then
\[
        x\ominus y\in I,
        \qquad
        y\ominus x\in I.
\]
Put
\[
        i=x\ominus y,
        \qquad
        j=y\ominus x.
\]
Using (1) and (2) in Lemma \ref{Lem:1}, we obtain
\[
        (x\ominus i)\oplus j
        =\big(x\ominus(x\ominus y)\big)\oplus(y\ominus x)
        =(x\wedge y)\oplus(y\ominus x)
        =y.
\]
Thus
\[
        x/I\subseteq\{(x\ominus i)\oplus j:i,j\in I\}.
\]

Conversely, suppose
\[
        y=(x\ominus i)\oplus j
\]
for some \(i,j\in I\).  Clearly, \(x\ominus i\leq y\) holds, so we obtain by Lemma \ref{Lem:2} and (2) in Lemma \ref{Lem:1}:
\[
        x\ominus y\leq x\ominus(x\ominus i)=x\wedge i\leq i.
\]
Since \(I\) is downward closed and \(i\in I\), we get \(x\ominus y\in I\).

Also, since \(x\ominus i\leq x\), we have by (6) in Lemma \ref {Lem:1}
\[
        y=(x\ominus i)\oplus j\leq x\oplus j.
\]
Hence, by (6) in Lemma \ref {Lem:1},
\[
        y\odot x^\ast\leq (x\oplus j)\odot x^\ast,
\]
that is, by (4) in Lemma \ref {Lem:1}
\[
        y\ominus x\leq (x\oplus j)\ominus x\leq j.
\]

Since \(I\) is downward closed and \(j\in I\), it follows that \(y\ominus x\in I\). Thus \(y\in x/I\). The equality follows.
\end{proof}

Recall that a space \(X\) is called \textbf{compact} if for any open cover \(\mathcal {U}\) there is a subcover \(\mathcal {V}\subseteq \mathcal {U}\) which has only finite elements. It is well known that the product of any finite number of compact spaces is compact, and this result does not require any separation axioms \cite{Engelking1989}. Let \(f:X\rightarrow Y\) a continuous map. Then \(f\) is called a \textbf{ perfect map} if \(f\) is closed and \(f^{-1}(y)\) is compact in \(X\) for each \(y\in Y\).

\begin{theorem}\label{thm:compact-ideal-closed-quotient}
Let \(A\) be a topological \(MV\)-algebra and let \(I\) be a compact ideal of \(A\). Let \( q:A\longrightarrow A/I
\) be the natural quotient \(MV\)-homomorphism, where \(A/I\) is endowed with the quotient topology. Then \(q\) is an open and perfect mapping.
\end{theorem}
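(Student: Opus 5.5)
Openness and continuity of \(q\) are already supplied by Corollary \ref{cor:natural-quotient-open}, so only two things remain: the fibres of \(q\) are compact, and \(q\) is closed. The tool for both is the parametrization of classes in Proposition \ref{lem:class-parametrization}. We encode it in the continuous map
\[
        \Phi:A\times(I\times I)\longrightarrow A,\qquad \Phi(x,(i,j))=(x\ominus i)\oplus j .
\]
This map is continuous because \(\ominus\) and \(\oplus\) are continuous term operations of the topological \(MV\)-algebra \(A\). By Proposition \ref{lem:class-parametrization}, \(\Phi(\{x\}\times I\times I)=x/I=q^{-1}(q(x))\) for every \(x\in A\).

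For compactness of fibres, fix \(x\in A\). Since \(I\) is compact, \(K:=I\times I\) is compact, and this needs no separation axioms. The map \((i,j)\mapsto\Phi(x,(i,j))\) is continuous, so its image \(x/I=q^{-1}(q(x))\) is compact. Every fibre of \(q\) has this form because \(q\) is surjective.

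For closedness, let \(F\subseteq A\) be closed; we must show that \(q(F)\) is closed in \(A/I\). Since \(q\) is a quotient map, it suffices to show that the saturation \(q^{-1}(q(F))\) is closed in \(A\). We claim
\[
        q^{-1}(q(F))=\{x\in A:\Phi(\{x\}\times K)\cap F\neq\emptyset\}=S_F .
\]
Indeed, \(x\in q^{-1}(q(F))\) if and only if \(x/I\cap F\neq\emptyset\), and \(x/I=\Phi(\{x\}\times K)\) by Proposition \ref{lem:class-parametrization}. Lemma \ref{lem:compact-parameter-closed}, applied with \(X=Y=A\) and the compact space \(K=I\times I\), gives that \(S_F\) is closed. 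Hence \(q^{-1}(q(F))\) is closed, and since \(A\setminus q^{-1}(q(F))=q^{-1}\bigl((A/I)\setminus q(F)\bigr)\) is open, \((A/I)\setminus q(F)\) is open. So \(q(F)\) is closed.

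The only step that needs care is this last identification of the saturation of \(F\) with \(S_F\). It depends on the equality of the full class \(x/I\) with the image of \(\{x\}\times I\times I\), not merely an inclusion; this is exactly what Proposition \ref{lem:class-parametrization} provides, and it is where compactness of \(I\) enters, through Lemma \ref{lem:compact-parameter-closed}. Combining continuity, closedness and compact fibres, \(q\) is perfect, and together with openness from Corollary \ref{cor:natural-quotient-open} this proves the theorem.
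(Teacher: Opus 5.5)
Your proposal is correct and follows the paper's proof essentially step for step. Openness comes from Corollary~\ref{cor:natural-quotient-open}. Closedness comes from identifying the saturation \(q^{-1}(q(F))\) with \(S_F\) for \(\Phi(x,i,j)=(x\ominus i)\oplus j\), using Proposition~\ref{lem:class-parametrization} and Lemma~\ref{lem:compact-parameter-closed}, and the fibres are compact as continuous images of \(I\times I\).
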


\begin{proof}
Clearly, \(q\) is a quotient mapping in topology sense, so \(q\) is a continuous and open mapping by Corollary \ref{cor:natural-quotient-open}.

Next, we shall prove that \(q\) is a closed mapping. Let \(F\subseteq A\) be closed.  We prove that \(q(F)\) is closed in \(A/I\).  Since \(q\) is a quotient map, it is enough to show that
\[
        q^{-1}(q(F))
\]
is closed in \(A\).

Define
\[
        \Phi:A\times I\times I\longrightarrow A
\]
by
\[
        \Phi(x,i,j)=(x\ominus i)\oplus j.
\]
The map \(\Phi\) is continuous because \(A\) is a topological \(MV\)-algebra.  Since \(I\) is compact, \(I\times I\) is compact.

By Proposition \ref{lem:class-parametrization}, for every \(x\in A\),
\[
        q^{-1}(q(x))=x/I=\Phi(\{x\}\times I\times I).
\]
Therefore
\[
\begin{aligned}
        q^{-1}(q(F))
        &=\{x\in A:q^{-1}(q(x))\cap F\neq\emptyset\}\\
        &=\{x\in A:\Phi(\{x\}\times I\times I)\cap F\neq\emptyset\}.
\end{aligned}
\]
By Lemma \ref{lem:compact-parameter-closed}, this set is closed in \(A\). Hence \(q^{-1}(q(F))\) is closed.  Since \(q\) is a quotient map, \(q(F)\) is closed in \(A/I\).  Thus \(q\) is closed.

For every \(x\in A\), Proposition \ref{lem:class-parametrization} gives
\[
        q^{-1}(q(x))=x/I=\Phi_x(I\times I),
\]
where
\[
        \Phi_x(i,j)=(x\ominus i)\oplus j.
\]
Since \(I\) is compact, \(I\times I\) is compact. Hence, \(x/I\) is compact as a continuous image of \(I\times I\) because \(\Phi_x\) is continuous. Hence all fibres are compact. Therefore \(q\) is continuous,
closed, and has compact fibres; hence \(q\) is perfect.
\end{proof}

\begin{corollary}\label{cor:compact-ideal-closed-quotient}
Let \(A\) be a topological \(MV\)-algebra, let \(B\) be an \(MV\)-algebra with a topology, and let 
\( q:A\longrightarrow B\)
is a surjective \(MV\)-homomorphism which is a quotient map in the topological sense. If  \(\operatorname{Ker} q\) is compact in \(A\), then \(q\) is an open and perfect map.
\end{corollary}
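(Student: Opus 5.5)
The plan is to reduce Corollary \ref{cor:compact-ideal-closed-quotient} to Theorem \ref{thm:compact-ideal-closed-quotient} by identifying \(q\) with the natural quotient map \(\pi:A\to A/\operatorname{Ker} q\).

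Put \(I=\operatorname{Ker} q\), which is an ideal of \(A\) and compact by hypothesis. Since \(q\) is a surjective \(MV\)-homomorphism that is a quotient map in the topological sense, Theorem \ref{lem:quotient-open-general} shows that \(q\) is open and that \((B,\tau)\) is a topological \(MV\)-algebra. Corollary \ref{cor:1} then gives a topological \(MV\)-isomorphism \(\bar q:A/I\to B\) with \(q=\bar q\circ\pi\), where \(A/I\) carries the quotient topology.

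By Theorem \ref{thm:compact-ideal-closed-quotient}, \(\pi\) is continuous, open, closed, and has compact fibres. Composing with the homeomorphism \(\bar q\) preserves all of these properties. Hence \(q\) is closed, and each fibre satisfies \(q^{-1}(b)=\pi^{-1}(\bar q^{-1}(b))\), so it is compact. Therefore \(q\) is open and perfect.

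The only point needing care is the hypothesis of Corollary \ref{cor:1}, which assumes that \(B\) is already a topological \(MV\)-algebra and that \(f\) is continuous. Both hold here: continuity is part of being a quotient map, and the topological \(MV\)-algebra structure on \(B\) comes from Theorem \ref{lem:quotient-open-general}.

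If one prefers to avoid this step, there is a direct argument. For closed \(F\subseteq A\), the saturation \(q^{-1}(q(F))\) equals \(\{x:\Phi(\{x\}\times I\times I)\cap F\neq\emptyset\}\), because the fibres of \(q\) are exactly the cosets \(x/I\) described in Proposition \ref{lem:class-parametrization}. This set is closed by Lemma \ref{lem:compact-parameter-closed}, and since \(q\) is a quotient map, \(q(F)\) is closed. The fibres are compact as continuous images \(\Phi_x(I\times I)\).
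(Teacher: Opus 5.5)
Your proof is correct and follows the paper's route: you identify \(B\) with \(A/\operatorname{Ker} q\) via Corollary \ref{cor:1} and transfer the conclusions of Theorem \ref{thm:compact-ideal-closed-quotient} through the homeomorphism \(\bar q\), and you rightly invoke Theorem \ref{lem:quotient-open-general} first so that \(B\) is a topological \(MV\)-algebra, a hypothesis of Corollary \ref{cor:1} that the paper leaves implicit. Your alternative direct argument is also valid, since the fibres of \(q\) are exactly the classes \(x/\operatorname{Ker} q\).
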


\begin{proof}
Use Corollary \ref{cor:1}, identify \(B\) with \(A/ \operatorname{Ker} q\), and apply
Theorem \ref{thm:compact-ideal-closed-quotient}.
\end{proof}

\section{Applications}\label{Sec:2} \label{app}

Now we shall apply results above to study the extension in topological \(MV\)-algebras. We first derive stronger consequences under mild separation assumptions.

 Recall that a topological space is \textbf{regular} if for all \(x\in X\) and each neighborhood \(U\) of \(x\), there exists a
neighborhood \(V\) of \(x\) such that \(\overline{V}\subseteq U\), where \(\overline{V}\) is the closure of \(V\). A topological space \(X\) is \textbf{\(T_3\)} if \(X\) is regular and \(T_1\).
The following result improves \cite[ Theorem 3.6 (3)]{LY}.

\begin{proposition}\label{prop:T1-iff-closed-ideal}
Let \(A\) be a topological \(MV\)-algebra and \(I\) an ideal of \(A\). Then \(A/I\) endowed with the quotient topology is \(T_3\) if and only if \(I\) is closed in \(A\).
\end{proposition}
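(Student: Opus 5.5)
Forward direction: if \(A/I\) is \(T_3\), it is in particular \(T_1\), so the singleton \(\{0/I\}\) is closed in \(A/I\). Since \(q\) is continuous, \(q^{-1}(\{0/I\})=0/I=I\) is closed in \(A\). Here I use that the class of \(0\) is exactly \(I\), because \(d(0,y)=y\).

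Converse: assume \(I\) is closed. The plan is to prove three things in order: first \(T_1\), then regularity at \(0/I\), then regularity at an arbitrary point.

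\emph{\(T_1\).} Fix \(x\in A\). By Proposition \ref{lem:class-parametrization}, \(x/I=\{y:x\ominus y\in I,\ y\ominus x\in I\}\). This is the intersection of the preimages of the closed set \(I\) under the continuous maps \(y\mapsto x\ominus y\) and \(y\mapsto y\ominus x\), so it is closed. It is also saturated, since it is a single class. Because \(q\) is a quotient map, \(\{x/I\}\) is closed in \(A/I\), so \(A/I\) is \(T_1\).

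\emph{Regularity at \(0/I\).} By Corollary \ref{cor:natural-quotient-open}, \(A/I\) is a topological \(MV\)-algebra, and \(\{0/I\}\) is closed in it. It therefore suffices to prove regularity of a topological \(MV\)-algebra \(B\) with \(\{0\}\) closed, and I would do this directly. Let \(U\) be an open neighbourhood of \(0\) in \(B\). Continuity of \(\oplus\) at \((0,0)\) gives an open neighbourhood \(V\ni 0\) with \(V\oplus V\subseteq U\). I claim \(\overline V\subseteq U\).

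The point is to show \(\overline V\subseteq V\oplus V\). Take \(z\in\overline V\). Continuity of \(y\mapsto z\ominus y\) at \(y=z\), where the value is \(0\), gives a neighbourhood \(N\) of \(z\) with \(z\ominus y\in V\) for all \(y\in N\). Choose \(y\in N\cap V\). Then \(z\le (z\wedge y)\oplus(z\ominus y)\le y\oplus(z\ominus y)\in V\oplus V\subseteq U\).

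\textbf{Main obstacle.} This only shows \(z\) is bounded above by an element of \(U\); it does not put \(z\) itself in \(U\), because \(U\) need not be downward closed. The fix I would try is to use neighbourhoods built from Chang distance: \(d(z,0)=z\), and I want \(z=y\oplus(z\ominus y)\) on the nose. Replace \(y\) by \(z\wedge y\). Then Lemma \ref{Lem:1}(1) gives \(z=(z\wedge y)\oplus(z\ominus y)\) exactly. It remains to know that \(z\wedge y\in V\), and \(z\wedge y\to z\wedge z=z\) as \(y\to z\) is the wrong direction for that. Instead, choose \(V\) so that \(V\oplus V\subseteq U\) and \(V\) is closed under \(\wedge\) with elements near \(0\). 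Concretely, by continuity of \((a,b)\mapsto a\wedge b\) at \((0,c)\), I expect to shrink to a \(V'\) with \(V'\wedge B\subseteq V\). I regard this step as the crux and am unsure it works as stated.

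\emph{Regularity at a general point.} I would transfer regularity from \(0\) to a general point \(a\) using the neighbourhoods \(U(a)\) from Section \ref{sec:preliminaries}, with \(U(a)\) defined from a neighbourhood \(U\) of \(0\) with \(\overline U\subseteq W\). If \(z\in\overline{U(a)}\), then by continuity of the maps \(z\mapsto a\ominus z\) and \(z\mapsto z\ominus a\), both \(a\ominus z\) and \(z\ominus a\) lie in \(\overline U\subseteq W\). Hence \(\overline{U(a)}\subseteq W(a)\), which gives regularity at \(a\).
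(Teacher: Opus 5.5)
Your forward direction and your $T_1$ argument are correct and match the paper's. The gap is the regularity step, exactly where you flag it. With $V\oplus V\subseteq U$ and $z=(z\wedge y)\oplus(z\ominus y)$, you need $z\wedge y\in V$, and your proposed repair does not supply this. Note that $V'\wedge B={\downarrow}V'$: indeed $a\wedge b\le a$, and every $c\le a$ equals $a\wedge c$. So asking for an open $V'\ni 0$ with $V'\wedge B\subseteq V$ is asking for a down-closed (equivalently, convex) neighbourhood of $0$ inside $V$. That is local convexity at $0$, which the paper imposes as an extra hypothesis in its final theorem. Continuity of $\wedge$ at each $(0,c)$ only yields neighbourhoods $V'_c, N_c$ with $V'_c\wedge N_c\subseteq V$, and these depend on $c$. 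Without compactness of $B$ you cannot make them uniform. As written, regularity at $0/I$, and therefore everything after it, is unproved.

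The missing idea is to stop trying to put $z\wedge y$ into $V$ and instead write it as a continuous expression in small elements. By Lemma \ref{Lem:1}(2), $z\wedge y=y\ominus(y\ominus z)$, so
\[ z=\bigl(y\ominus(y\ominus z)\bigr)\oplus(z\ominus y). \]
The map $\Theta(a,b,c)=(a\ominus b)\oplus c$ is continuous and $\Theta(0,0,0)=0$, so choose an open $V\ni 0$ with $\Theta(V\times V\times V)\subseteq U$. If $z\in\overline V$, then the neighbourhood $V(z)$ of $z$ meets $V$. This gives $y\in V$ with both $y\ominus z\in V$ and $z\ominus y\in V$; you need both, whereas your $N$ only controlled $z\ominus y$. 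Hence $z=\Theta(y,y\ominus z,z\ominus y)\in U$, so $\overline V\subseteq U$. This is the same device as the map $\Theta_v$ in the proof of Theorem \ref{lem:quotient-open-general}.

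Your transfer to a general point $a$ then goes through, but you must also check that the sets $W(a)$ form a neighbourhood base at $a$, not merely that they are neighbourhoods of $a$. This follows from $z=(a\ominus(a\ominus z))\oplus(z\ominus a)$ and continuity of $(p,r)\mapsto(a\ominus p)\oplus r$ at $(0,0)$. Alternatively, do what the paper does: once $A/I$ is $T_1$ and a topological $MV$-algebra (Corollary \ref{cor:natural-quotient-open}), cite \cite[Corollary 3.10]{GLD}, which says every $T_1$ topological $MV$-algebra is $T_3$.
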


\begin{proof}
Let \(q:A\longrightarrow A/I\) be the natural quotient map.

Assume first that \(A/I\) is \(T_3\).  Then \(\{0/I\}\) is closed in \(A/I\), and hence
\[
        I=q^{-1}(\{0/I\})
\]
is closed in \(A\).

Conversely, suppose that \(I\) is closed.  For any \(x\in A\),
\[
        q^{-1}(\{x/I\})=x/I=\{y\in A:x\ominus y\in I\text{ and }y\ominus x\in I\}.
\]
The maps \(y\mapsto x\ominus y\) and \(y\mapsto y\ominus x\) are continuous.  Since \(I\) is closed, the set \(x/I\) is closed in \(A\). Therefore every singleton in \(A/I\) has closed inverse image under the quotient map \(q\), and hence every singleton in \(A/I\) is closed.  Thus \(A/I\) is \(T_1\). According to Corollary \ref{cor:natural-quotient-open} it follows that \(A/I\) is a topological \(MV\)-algebra. Hence, \(A/I\) is regular follows from the fact that every \(T_1\) topological \(MV\)-algebra is \(T_3\) \cite[Corollary 3.10]{GLD}.
\end{proof}

Since every compact set in Hausdorff space is closed, we can obtain the following result by Proposition \ref{prop:T1-iff-closed-ideal}.

\begin{corollary}\label{cor:Hausdorff-compact-ideal-quotient}
If \(A\) is a Hausdorff topological \(MV\)-algebra and \(I\) is a compact ideal of \(A\), then \(A/I\) is \(T_3\).
\end{corollary}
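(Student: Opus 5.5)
The plan is to reduce the statement to Proposition \ref{prop:T1-iff-closed-ideal}, which says that \(A/I\) with the quotient topology is \(T_3\) exactly when \(I\) is closed in \(A\). So the only thing to establish is that a compact ideal of a Hausdorff topological \(MV\)-algebra is closed.

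First, I would recall the standard fact that in a Hausdorff space every compact subset is closed. This fact needs no algebraic structure. Given \(x\notin I\), for each \(i\in I\) choose disjoint open sets \(U_i\ni x\) and \(V_i\ni i\). Compactness of \(I\) gives finitely many \(V_{i_1},\dots,V_{i_n}\) covering \(I\). Then \(U_{i_1}\cap\cdots\cap U_{i_n}\) is a neighbourhood of \(x\) disjoint from \(I\), so \(A\setminus I\) is open.

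Applying this with the Hausdorff space \(A\) and the compact set \(I\) shows that \(I\) is closed. Proposition \ref{prop:T1-iff-closed-ideal} then gives that \(A/I\) is \(T_3\). No further use of the perfectness of \(q\) from Theorem \ref{thm:compact-ideal-closed-quotient} is needed, though it is consistent with it.

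There is no real obstacle here. The only point to check is that Proposition \ref{prop:T1-iff-closed-ideal} is applied with the quotient topology, exactly as in the statement, and that its regularity half, which rests on \cite[Corollary 3.10]{GLD} via Corollary \ref{cor:natural-quotient-open}, needs no extra hypothesis on \(A\).
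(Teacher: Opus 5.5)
Your proposal is correct and follows the paper's argument exactly: the paper also obtains the corollary in one line by noting that a compact subset of a Hausdorff space is closed and then applying Proposition~\ref{prop:T1-iff-closed-ideal}. Your explicit finite-subcover proof that compact implies closed supplies the standard detail that the paper leaves implicit.
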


A topological space \(X\) is called \textbf{locally compact} if for each \(x\in X\) there is a compact neighborhood \(V\) of \(x\). It is well known that  a Hausdorff space \(X\) is locally compact if and only if each point in \(X\)
has a neighbourhood base of compact sets. That is, for each \(x\in X\) and each neighborhood \(V\) of \(x\) there is a neighborhood \(U\) of \(x\) such that \(\overline{U}\subseteq V\) satisfying \(\overline{U}\) being compact in \(X\).
The following result is known in folklore; for the sake of completeness, we provide a proof.

\begin{lemma}\label{lem:perfect-preimage-compact}
Let \(f:X\longrightarrow Y\) be a perfect map. If \(K\subseteq Y\) is compact, then \(f^{-1}(K)\) is compact.
\end{lemma}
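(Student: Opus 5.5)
I plan to use the standard open-cover argument, with no separation axioms, relying only on two properties of \(f\): it is closed and its fibres are compact. Let \(\mathcal{U}\) be a cover of \(f^{-1}(K)\) by sets open in \(X\). I would reduce it to a finite subcover in three steps.

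\emph{First step.} For each \(y\in K\), the fibre \(f^{-1}(y)\) is compact and covered by \(\mathcal{U}\). Choose a finite subfamily \(\mathcal{U}_y\subseteq\mathcal{U}\) covering \(f^{-1}(y)\), and put \(U_y=\bigcup\mathcal{U}_y\). This set is open in \(X\) and contains \(f^{-1}(y)\). If some fibre is empty, take \(\mathcal{U}_y=\emptyset\); nothing changes in what follows.

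\emph{Second step.} Here the closedness of \(f\) is used to transfer the open sets \(U_y\) to \(Y\). Set
\[
V_y=Y\setminus f(X\setminus U_y).
\]
Since \(f\) is closed, \(f(X\setminus U_y)\) is closed, so \(V_y\) is open in \(Y\). Moreover \(y\in V_y\), because every preimage of \(y\) lies in \(U_y\). Finally, \(f^{-1}(V_y)\subseteq U_y\): if \(x\notin U_y\), then \(f(x)\in f(X\setminus U_y)\), so \(f(x)\notin V_y\).

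\emph{Third step.} The family \(\{V_y:y\in K\}\) covers \(K\). By compactness of \(K\), choose \(y_1,\dots,y_n\) with \(K\subseteq V_{y_1}\cup\cdots\cup V_{y_n}\). Then
\[
f^{-1}(K)\subseteq\bigcup_{k=1}^n f^{-1}(V_{y_k})\subseteq\bigcup_{k=1}^n U_{y_k},
\]
so the finite family \(\mathcal{U}_{y_1}\cup\cdots\cup\mathcal{U}_{y_n}\) covers \(f^{-1}(K)\). Hence \(f^{-1}(K)\) is compact.

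The only step needing care is the second one: the ``tube'' \(V_y\) constructed from closedness must satisfy \(f^{-1}(V_y)\subseteq U_y\). This inclusion follows directly from the definition of \(V_y\), and continuity of \(f\) is not even needed for it. Everything else is routine.
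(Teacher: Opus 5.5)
Your proof is correct and follows the paper's argument essentially step for step. You take a finite subcover $U_y$ of each compact fibre and use closedness of $f$ to get the open sets $V_y = Y\setminus f(X\setminus U_y)$ with $f^{-1}(V_y)\subseteq U_y$. A finite subcover of $K$ by the $V_y$ then finishes the proof, and your side remarks on empty fibres and the unused continuity of $f$ are accurate.
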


\begin{proof}
Let \(\mathcal U\) be an open cover of \(f^{-1}(K)\). For each \(y\in K\), the fibre \(f^{-1}(y)\) is compact, so there are finitely many members \(U_{y,1},\ldots,U_{y,n_y}\) of \(\mathcal U\) such that
\[
        f^{-1}(y)\subseteq U_y:=U_{y,1}\cup\cdots\cup U_{y,n_y}.
\]
Since \(f\) is closed and \(X\setminus U_y\) is closed, the set
\[
        V_y=Y\setminus f(X\setminus U_y)
\]
is an open neighbourhood of \(y\). Moreover,
\[
        f^{-1}(V_y)\subseteq U_y.
\]
The family \(\{V_y:y\in K\}\) covers the compact set \(K\). Choose \(y_1,\ldots,y_m\in K\) such that
\[
        K\subseteq V_{y_1}\cup\cdots\cup V_{y_m}.
\]
Then
\[
        f^{-1}(K)\subseteq U_{y_1}\cup\cdots\cup U_{y_m},
\]
and the right hand side is covered by finitely many members of \(\mathcal U\). Hence \(f^{-1}(K)\) is compact.
\end{proof}

\begin{theorem}\label{thm:compact-kernel-three-space}
Let \(A\) be a topological \(MV\)-algebra and let \(I\) be a compact ideal of \(A\). Then for each of the following properties \(\mathcal P\), the algebra \(A\) has \(\mathcal P\) if and only if so has the quotient space \(A/I\):
\begin{enumerate}
\item[(1)] compactness;
\item[(2)] local compactness;
\item[(3)]\(\sigma\)-compactness;
\item[(4)] Lindel\"ofness.
\end{enumerate}
\end{theorem}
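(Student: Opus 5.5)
The plan is to use Theorem \ref{thm:compact-ideal-closed-quotient}: \(q:A\to A/I\) is a continuous, open, closed surjection with compact fibres. Each property will then pass in both directions by general facts about such maps, namely perfect preimages (Lemma \ref{lem:perfect-preimage-compact}) for pulling back and continuous images for pushing forward. No separation axioms are needed, which suits the setting, since none are assumed.

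\emph{From \(A\) to \(A/I\).} Compactness, \(\sigma\)-compactness and Lindel\"ofness are preserved by continuous surjections, because images of compact sets are compact and an open cover of \(A/I\) pulls back to an open cover of \(A\). For local compactness I use openness of \(q\). If \(V\) is a compact neighbourhood of \(x\), with \(x\in U\subseteq V\) and \(U\) open, then \(q(U)\) is open and contains \(q(x)\), so \(q(V)\) is a compact neighbourhood of \(q(x)\). Every point of \(A/I\) is of this form because \(q\) is surjective.

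\emph{From \(A/I\) to \(A\).} Compactness: \(A=q^{-1}(A/I)\) is compact by Lemma \ref{lem:perfect-preimage-compact}. \(\sigma\)-compactness: if \(A/I=\bigcup_n K_n\) with each \(K_n\) compact, then \(A=\bigcup_n q^{-1}(K_n)\) expresses \(A\) as a countable union of compact sets, again by Lemma \ref{lem:perfect-preimage-compact}. Lindel\"ofness: I repeat the argument of Lemma \ref{lem:perfect-preimage-compact} with countable subcovers. Given an open cover \(\mathcal U\) of \(A\), each fibre \(q^{-1}(y)\) is covered by a finite union \(U_y\) of members of \(\mathcal U\). Then \(V_y=(A/I)\setminus q(A\setminus U_y)\) is an open neighbourhood of \(y\) with \(q^{-1}(V_y)\subseteq U_y\), because \(q\) is closed. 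Countably many \(V_{y_n}\) cover \(A/I\), so countably many members of \(\mathcal U\) cover \(A\).

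Local compactness from \(A/I\) to \(A\): for \(x\in A\), take a compact neighbourhood \(K\) of \(q(x)\), say \(q(x)\in W\subseteq K\) with \(W\) open. Then \(q^{-1}(W)\) is open and contains \(x\), and \(q^{-1}(K)\) is compact by Lemma \ref{lem:perfect-preimage-compact}, so \(q^{-1}(K)\) is a compact neighbourhood of \(x\).

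I expect no real obstacle. The only point needing care is that the definition of local compactness used here asks for some compact neighbourhood, not a base of them; this avoids any need for Hausdorffness or closures, and both directions then go through as above.
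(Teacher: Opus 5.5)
Your proposal is correct and follows essentially the same route as the paper: both invoke Theorem \ref{thm:compact-ideal-closed-quotient} to get that \(q\) is open and perfect, push each property forward along the continuous surjection \(q\) (using openness for local compactness), pull it back via Lemma \ref{lem:perfect-preimage-compact}, and handle the Lindel\"of case by rerunning that lemma's argument with countable subcovers. Your step of passing to an open set inside the compact neighbourhood simply makes explicit what the paper leaves implicit in the local compactness case.
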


\begin{proof}
Let \(q:A\longrightarrow A/I\) be the natural quotient \(MV\)-homomorphism. Then the quotient map \(q\) is open and perfect by Theorem \ref{thm:compact-ideal-closed-quotient}. We verify the stated properties.

(1) If \(A\) is compact, then \(A/I\) is compact as a continuous image of \(A\).
Conversely, if \(A/I\) is compact, then \(A\)
is compact as a perfect preimage of \(A/I\) by Lemma \ref{lem:perfect-preimage-compact}.

(2) Suppose first that \(A\) is locally compact. Let \(b\in A/I\) and \(x\in A\) such that \(q(x)=b\). Take a compact neighborhood \(U\) of \(x\). Then
\(q(U)\) is compact neighborhood of \(b\) as continuous open image of \(U\) because \(q\) is continuous and open.

Conversely, suppose first that \(A/I\) is locally compact. Take an \(x\in A\). Then there is a compact neighborhood \(U\) of \(q(x)\). Hence \(q^{-1}(U)\) is a compact neighborhood of \(x\) by Lemma \ref{lem:perfect-preimage-compact}.

(3) If \(A\) is \(\sigma\)-compact, then its continuous image \(A/I\) is \(\sigma\)-compact. Conversely, if
\[
        A/I=\bigcup_{n=1}^\infty K_n
\]
with each \(K_n\) compact, then
\[
        A=\bigcup_{n=1}^\infty q^{-1}(K_n),
\]
and each \(q^{-1}(K_n)\) is compact by Lemma \ref{lem:perfect-preimage-compact}.

(4) If \(A\) is Lindel\"of, then \(A/I\) is Lindel\"of as a continuous image. Conversely, suppose \(A/I\) is Lindel\"of and let \(\mathcal U\) be an open cover of \(A\). For each \(y\in A/I\), the compact fibre \(q^{-1}(y)\) is covered by finitely many members of \(\mathcal U\); let their union be \(U_y\). As in the proof of Lemma \ref{lem:perfect-preimage-compact}, the set
\[
        V_y=(A/I)\setminus q(A\setminus U_y)
\]
is an open neighbourhood of \(y\) and satisfies \(q^{-1}(V_y)\subseteq U_y\). Since \(A/I\) is Lindel\"of, choose countably many \(y_k\) such that \(\{V_{y_k}:k\in\mathbb N\}\) covers \(A/I\). The corresponding countable union of finite subfamilies of \(\mathcal U\) covers \(A\). Thus \(A\) is Lindel\"of.

\begin{theorem}\label{5.5}
Let \(A\) be a Hausdorff topological \(MV\)-algebra and let \(I\) be a compact ideal of \(A\). Then \(A\) is paracompact if and only if so is the quotient space \(A/I\).
\end{theorem}
Let \(q:A\longrightarrow A/I\) be the natural quotient \(MV\)-homomorphism. Then the quotient map \(q\) is open and perfect by Theorem \ref{thm:compact-ideal-closed-quotient}.

Since \(A\) is a Hausdorf topological \(MV\)-algebra, \(A\) is \(T_3\) by \cite[Corollary 3.10]{GLD}. Hence \(A/I\) as a perfect image of \(A\) is paracompact by \cite[Theorem 5.1.33]{Engelking1989}.
Conversely, it follows from the fact that paracompactess is an inverse invariant of perfect maps \cite[Theorem 5.1.35]{Engelking1989}.
\end{proof}

\begin{lemma}\label{Lem:ideal}
Let \(I\) be an ideal of a topological \(MV\)-algebra such that \(I\) has a countable neighborhood base in \(A\). If \(0 \) has a countable neighborhood base in the subspace \(I\), then \(A\) is a first countable space.
\end{lemma}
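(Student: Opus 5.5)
The plan is to reduce everything to a countable base at \(0\), and then to build candidate basic sets by combining the countable base of \(I\) in \(A\) with the countable base of \(0\) in the subspace \(I\).

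\textbf{Step 1: reduction to the point \(0\).} Suppose \(\{P_k:k\in\mathbb N\}\) is a countable base at \(0\) in \(A\), and fix \(a\in A\). I claim the sets \(P_k(a)\) form a base at \(a\), where \(U(a)\) is the set defined in Section \ref{sec:preliminaries}. Let \(O\) be a neighbourhood of \(a\). Apply continuity of \(\Theta_a(p,r)=(a\ominus p)\oplus r\) at \((0,0)\), as in the proof of Theorem \ref{lem:quotient-open-general}. This gives a neighbourhood \(W\) of \(0\) with \(\Theta_a(W\times W)\subseteq O\). Take \(k\) with \(P_k\subseteq W\). For \(y\in P_k(a)\), Lemma \ref{Lem:1}(1),(2) give
\[
y=(a\ominus(a\ominus y))\oplus(y\ominus a)=\Theta_a(a\ominus y,\,y\ominus a)\in O .
\]
Hence \(P_k(a)\subseteq O\), and \(A\) is first countable.

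\textbf{Step 2: the countable data.} Fix a countable family \(\{G_m\}\) of open sets containing \(I\) such that every open set containing \(I\) contains some \(G_m\). Fix open sets \(O_n\) of \(A\) such that \(\{O_n\cap I\}\) is a base at \(0\) in \(I\). The candidate countable base at \(0\) consists of finite intersections of the \(O_n\) and \(G_m\).

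\textbf{Step 3: the decomposition argument.} Given an open \(U\ni0\), choose a neighbourhood \(W\) of \(0\) with \(W\oplus W\subseteq U\), and then \(n\) with \(O_n\cap I\subseteq W\). Choose a smaller neighbourhood \(W'\subseteq W\) of \(0\) and set
\[
H=\{x\in A:\exists\, i\in I \text{ with } x\ominus i\in W' \text{ and } i\ominus x\in W'\}.
\]
Then \(H\) is a union of open sets \(W'(i)\), and \(H\supseteq I\) (take \(i=x\)). So some \(G_m\subseteq H\). For \(x\in G_m\) with a witness \(i\), Lemma \ref{Lem:1}(1) gives
\[
x=(x\wedge i)\oplus(x\ominus i),
\]
where \(x\ominus i\in W\) and \(x\wedge i=x\ominus(x\ominus i)\in I\). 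So it suffices to get \(x\wedge i\in O_n\), because then \(x\in W\oplus W\subseteq U\).

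\textbf{Main obstacle.} The difficulty is forcing \(x\wedge i\) into \(O_n\), since the sets \(O_n\) only control points of \(I\). The term \(x\ominus(x\ominus i)\) is jointly continuous in \((x,\,x\ominus i)\) and equals \(0\) at \((0,0)\). So I would choose \(W'\) and a neighbourhood \(T\) of \(0\) with \(T\ominus W'\subseteq O_n\), and then take \(x\in T\cap G_m\).

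The problem is that \(T\) must itself come from the countable family, which is circular. My first attempt to break this is as follows. Using that \(I\) is downward closed and \(x\wedge i\le i\), I would pick the witness \(i\) inside \(I\cap O_{n'}\) for a suitable \(n'\), enlarging \(H\) accordingly so that it still contains \(I\). Then \(x\wedge i\) is controlled through \(i\), again using continuity of \(\Theta\)-type maps at \(0\). Alternatively, I would iterate the choice of \(G_m\) so that the intersections \(O_{n'}\cap G_{m'}\) play the role of \(T\). If neither works, I expect this is the step where the additional hypothesis (local convexity) of the later three-space theorem would have to be used.
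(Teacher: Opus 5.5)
Your Step 1 (a self-contained proof of the reduction to the point $0$) is correct, and so is the decomposition $x=(x\wedge i)\oplus(x\ominus i)$ in Step 3. However, the proposal stops exactly at the decisive step. You never exhibit a countable family that is a base at $0$, and you end by suggesting that local convexity might be required.

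It is not required, and the circularity you describe is only apparent. The auxiliary neighbourhood $T$ need not depend on $U$; it only has to depend on the index $n$, and the inclusion $T\ominus W'\subseteq O_n$ survives shrinking $W'$. So, before $U$ is given, use continuity of $(p,r)\mapsto p\ominus r$ at $(0,0)$ to fix, for every $n$, open neighbourhoods $T_n,W'_n$ of $0$ with $p\ominus r\in O_n$ whenever $p\in T_n$ and $r\in W'_n$. Given $U$, choose $W$ and $n$ as you do. Run your construction of $H$ with the open set $W'=W\cap W'_n$, and use $H\supseteq I$ only to select an index $m$ with $G_m\subseteq H$.

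Now take $x\in G_m\cap T_n$ with witness $i\in I$. Then $x\ominus i\in W'\subseteq W$. By Lemma~\ref{Lem:1}(2), $x\wedge i=x\ominus(x\ominus i)\in O_n$. Since $x\wedge i\le i$ also gives $x\wedge i\in I$, we get $x\wedge i\in O_n\cap I\subseteq W$, hence $x\in W\oplus W\subseteq U$. Thus the countable family $\{G_m\cap T_n:m,n\in\mathbb N\}$ of open sets containing $0$ is a base at $0$.

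The repairs you sketch do not close the gap as stated. Restricting witnesses to $I\cap O_{n'}$ is incompatible with $H\supseteq I$. Once $H$ is enlarged to contain $I$, the control on $x\wedge i$ is lost again. The iteration idea is left unspecified.

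The paper closes the same gap by a purely topological device: every topological $MV$-algebra is regular, so for each $n$ one can take an open $V_n\ni 0$ with $\operatorname{cl}(V_n)\subseteq O_n$. Given $U$, pick $n$ with $O_n\cap I\subseteq U$. Then $U\cup(A\setminus\operatorname{cl}(V_n))$ is an open set containing $I$, so it contains some $G_m$, and $G_m\cap V_n\subseteq U$. That argument works in any regular space, once regularity of topological $MV$-algebras is available (the paper cites it). The repaired version of your argument instead uses the $MV$-operations directly and needs no regularity theorem.
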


\begin{proof}

According to \cite[Lemma 3.4]{GLD}, it is enough to show that the point \(0 \) has a countable neighborhood base in \(A\).

Let \(\{U_n\}_{n \in \mathbb N}\) be a countable neighborhood base of \(I\) in \(A\)  and \(\{G_m\}_{m \in \mathbb N}\) a countable neighborhood base of \(0\) in \(I\).

For each \(m\), since \(G_m\) is open in \(I\), there exists an open set \(H_m \subset A\) such that
\[
G_m = H_m \cap I.
\]

Since every topological \(MV\)-algebra is regular \cite[Theorem 3.9]{GLD}, one can take an open neighborhood \(W_m\) of \(0\) in \(A\) such that the closure \(\operatorname{cl}(W_m)\) of \(W_m\) satisfying
\[
\operatorname{cl}(W_m) \cap I \subset G_m \tag{1}
\]
for each \(m\in \mathbb N\).

We claim that the countable family \[\mathcal B=\{U_i\cap W_j:i,j\in \mathbb N\} \] is a neighborhood base at \(0\) in \(A\).

In fact, let \(O\) be any open neighborhood of \(0\) in \(A\). Since \(O \cap I\) is an open neighborhood of \(0\) in the subspace \(I\), and \(\{G_m\}_{m \in \mathbb N}\) is a neighborhood base of \(0\) in \(I\), there exists some \(m\in \mathbb N\) such that
\[
G_m \subset O \cap I.
\]
Combining this with (1), we get
\[
\operatorname{cl}(W_m) \cap I \subset G_m \subset O.
\]
Thus, we have
\[
(I\setminus O) \cap \operatorname{cl}(W_m) = \varnothing.
\]
Hence, we have  \(A\setminus \operatorname{cl}(W_m)\) is open in \(A\) satisfying
\[
(I\setminus O)\subseteq A\setminus \operatorname{cl}(W_m).
\]
Furthermore,
\[
I= (I \cap O) \cup (I \setminus O) \subset O \cup (A\setminus \operatorname{cl}(W_m)).
\]
Since \(\{U_n\}_{n \in \mathbb N}\) is a countable neighborhood base of \(I\) in \(A\),  there exists some \(n\in\mathbb N\) such that
\[
I\subseteq U_n \subset O \cup (A\setminus \operatorname{cl}(W_m)).
\]
Hence, we obtain
\[
U_n \cap W_m \subset  (O \cup (A\setminus \operatorname{cl}(W_m))) \cap W_m = O \cap W_m \subset O.
\]
 Hence \(\mathcal B\) is a countable neighborhood base of \(x\) in \(A\). This proves first countability of \(A\).
\end{proof}

\begin{theorem}\label{Them:5.7}
Let \(A\) be a topological \(MV\)-algebra and \(I\) be a compact ideal in \(A\). Then \(A\) is first countable if and only if both \(I\) and the quotient space \(A/I\) are first countable.
\end{theorem}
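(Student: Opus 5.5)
Necessity is routine. If \(A\) is first countable, then \(I\) is first countable as a subspace. For \(A/I\), let \(q:A\to A/I\) be the natural quotient map, which is open by Corollary \ref{cor:natural-quotient-open}. If \(\{V_n\}\) is a countable base at \(x\), then \(\{q(V_n)\}\) is a countable family of open neighbourhoods of \(q(x)\). It is a base there: given an open \(O\ni q(x)\), the set \(q^{-1}(O)\) is an open neighbourhood of \(x\), so it contains some \(V_n\), and then \(q(V_n)\subseteq O\). Hence \(A/I\) is first countable.

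For sufficiency I will use Lemma \ref{Lem:ideal}. Since \(I\) is first countable, \(0\) has a countable neighbourhood base in the subspace \(I\). It remains to show that \(I\) has a countable neighbourhood base in \(A\). Because \(A/I\) is first countable, pick a countable base \(\{O_n\}\) at \(0/I\) in \(A/I\) and set \(U_n=q^{-1}(O_n)\). Each \(U_n\) is open and contains \(q^{-1}(0/I)=I\).

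The key step is to show that the \(U_n\) form a base of neighbourhoods of the set \(I\). Let \(U\supseteq I\) be open in \(A\). By Theorem \ref{thm:compact-ideal-closed-quotient}, \(q\) is closed. Therefore \(V=(A/I)\setminus q(A\setminus U)\) is an open neighbourhood of \(0/I\) with \(q^{-1}(V)\subseteq U\); this is the same trick as in Lemma \ref{lem:perfect-preimage-compact}. Choose \(n\) with \(O_n\subseteq V\). Then \(I\subseteq U_n\subseteq q^{-1}(V)\subseteq U\). Lemma \ref{Lem:ideal} now gives that \(A\) is first countable.

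The only substantive point is closedness of \(q\), which is exactly where compactness of \(I\) enters. I should also check that Lemma \ref{Lem:ideal} needs no separation hypotheses beyond those it already uses, namely the regularity of topological \(MV\)-algebras from \cite[Theorem 3.9]{GLD} and the reduction to the point \(0\) from \cite[Lemma 3.4]{GLD}. Since the lemma is stated in the paper for arbitrary topological \(MV\)-algebras, it can be applied directly.
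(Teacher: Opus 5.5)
Your proposal is correct and follows the paper's proof essentially step for step. Necessity uses subspaces and open continuous images. Sufficiency reduces to Lemma \ref{Lem:ideal} and uses closedness of \(q\), which is where compactness of \(I\) enters, to show that the preimages of a countable base at \(0/I\) form a countable neighbourhood base of \(I\) in \(A\).
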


\begin{proof}
Let \(\pi:A\longrightarrow A/I\) be the natural quotient map. Then \(\pi\) is an open and perfect map by Theorem \ref{thm:compact-ideal-closed-quotient}.

Suppose that \(A\) is first countable. Then both \(I\) as a subspace of \(A\)  and \(A/I\) as a continuous and open image of \(A\) are first countable.

Conversely, suppose that both \(I\) and the quotient space \(A/I\) are first countable. According to Lemma \ref{Lem:ideal}, it is enough to show that \(I\) has a countable neighborhood base in \(A\).
In fact, since \(A/I\) is first countable, there is \(\{U_n\}_{n \in \mathbb N}\) is a countable open neighborhood base of \(\pi(0)\) in \(A/I\), where \(0\in A\). Then one can easily show that the countable
family \[\{\pi^{-1}(U_n)\}_{n \in \mathbb N}\] a countable neighborhood base of \(I\) in \(A\).

In fact, take an open neighborhood \(O\) of \(I\). Then \(A/I \setminus\pi(A\setminus O)\) is an open neighborhood of \(\pi(0)\) because \(\pi\) is closed. Hence, there is \(n\in \mathbb N\) such that
\[
\pi(0)\in U_n\subseteq A/I \setminus\pi(A\setminus O),\tag{1}
\]
because \(\{U_n\}_{n \in \mathbb N}\) is a countable open neighborhood base of \(\pi(0)\) in \(A/I\).
From (1) it follows that
\[I=\pi^{-1}(\pi(0))\subseteq \pi^{-1}(U_n)\subseteq \pi^{-1}( A/I \setminus\pi(A\setminus O))\subseteq O.\]
This implies that \(\{\pi^{-1}(U_n)\}_{n \in \mathbb N}\) is a countable neighborhood base of \(I\) in \(A\).
\end{proof}

\begin{lemma} \cite[4.17]{Gran} \label{para-p}
A Tychonoff space \(X\) admits a perfect mapping onto a metrizable
space if and only if \(X\) is a paracompact \(p\)-space.
\end{lemma}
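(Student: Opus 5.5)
Lemma \ref{para-p} is cited from \cite[4.17]{Gran}, so the plan is to recall the standard argument rather than derive it from the \(MV\)-algebra results of the paper. Recall that a Tychonoff space \(X\) is a \(p\)-space if there is a sequence \(\{\gamma_n\}\) of families of open sets in \(\beta X\), each covering \(X\), such that \(\bigcap_n \mathrm{St}(x,\gamma_n)\subseteq X\) for every \(x\in X\). Equivalently (Arhangel'skii), \(X\) is a \(p\)-space iff it has a \emph{pluming}. The proof splits into two directions.

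\textbf{Perfect preimage \(\Rightarrow\) paracompact \(p\)-space.} Let \(f:X\to M\) be perfect onto a metrizable \(M\).
\begin{enumerate}
\item[(a)] Paracompactness: \(M\) is paracompact by Stone's theorem, and paracompactness is an inverse invariant of perfect maps into Tychonoff domains \cite[Theorem 5.1.35]{Engelking1989}.
\item[(b)] \(p\)-space: choose a development \(\{\mathcal V_n\}\) of \(M\), e.g.\ covers by balls of radius \(1/n\). Extend \(f\) to \(\beta f:\beta X\to\beta M\). For each \(V\in\mathcal V_n\) pick an open \(\tilde V\subseteq\beta M\) with \(\tilde V\cap M=V\), and let \(\gamma_n=\{(\beta f)^{-1}(\tilde V)\}\).
\item[(c)] For \(x\in X\), the set \(\bigcap_n \mathrm{St}(x,\gamma_n)\) maps into \(\bigcap_n\overline{\mathrm{St}(f(x),\mathcal V_n)}^{\beta M}\). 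This intersection should reduce to \(\{f(x)\}\), after shrinking the \(\tilde V\) so that their closures behave.
\item[(d)] Perfectness of \(f\) gives \((\beta f)^{-1}(M)=X\), so the intersection lies in \(X\).
\end{enumerate}

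\textbf{Paracompact \(p\)-space \(\Rightarrow\) perfect map onto a metrizable space.} Let \(\{\gamma_n\}\) witness the \(p\)-property.
\begin{enumerate}
\item[(a)] Use paracompactness to refine \(\gamma_n\cap X\) into locally finite open covers \(\mathcal U_n\) with \(\mathcal U_{n+1}\) star-refining \(\mathcal U_n\). Arrange the closure condition so that \(\bigcap_n\mathrm{St}(x,\mathcal U_n)\) equals \(\bigcap_n\overline{\mathrm{St}(x,\mathcal U_n)}=:K_x\).
\item[(b)] Show \(K_x\) is compact: its closure in \(\beta X\) lies in \(\bigcap_n \mathrm{St}(x,\gamma_n)\subseteq X\).
\item[(c)] The normal sequence \(\{\mathcal U_n\}\) defines a continuous pseudometric \(\rho\). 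Its metric quotient \(M=X/\rho\) has fibres exactly the sets \(K_x\).
\item[(d)] Show \(X\to M\) is closed. This uses the fact that any sequence \(x_k\) with \(x_k\in\mathrm{St}(x,\mathcal U_k)\) clusters in \(K_x\), again by the \(p\)-space property.
\end{enumerate}

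I expect the main obstacle to be the closedness step (d) in the second direction, together with the bookkeeping of closures in \(\beta X\) in both directions. Since the lemma is a known result, in the paper I would simply invoke \cite[4.17]{Gran} rather than reproduce this argument.
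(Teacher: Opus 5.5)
Your proposal is correct and agrees with the paper, which gives no argument of its own and simply invokes \cite[4.17]{Gran}, as you say you would; your outline is the standard Arhangel'skii argument. The one step that needs an extra line is (b) of the converse: there $\overline{G\cap X}^{\beta X}$ need not lie in $G$, so you should first use regularity of $\beta X$ to replace each $\gamma_n$ by a cover of $X$ by open subsets of $\beta X$ whose $\beta X$-closures refine $\gamma_n$, after which (b) and the clustering claim in (d) go through; by contrast, step (c) of the first direction needs no shrinking, since $\tilde V\subseteq\overline{V}^{\beta M}$ automatically and $\bigcap_n\overline{\mathrm{St}(f(x),\mathcal V_n)}^{\beta M}=\{f(x)\}$ by regularity of $\beta M$ and first countability of $M$.
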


Let \(X\) be a topological space and \(\Delta=\{(x,x)\in X\times X:x\in X\}\). If \(\Delta\) is a \(G_\delta\)-set in the space \(X\times X\), that is, \(\Delta\) can be written as a countable intersection of open sets in \(X\times X\), then we call \(X\) with a \textbf{\(G_\delta\)-diagonal}.

\begin{lemma} \label{Lem:dia}
Every \(T_1\) first countable topological \(MV\)-algebra \(A\) has a \( G_\delta\)-diagonal.
\end{lemma}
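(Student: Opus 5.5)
We prove Lemma \ref{Lem:dia} by writing the diagonal as the zero set of a continuous map into \(A\), and then using a countable base at \(0\) to express it as a countable intersection of open sets. Consider the map
\[
        \delta:A\times A\longrightarrow A,\qquad \delta(x,y)=d(x,y)=(x\ominus y)\oplus(y\ominus x).
\]
It is continuous because \(\ominus\) and \(\oplus\) are continuous on a topological \(MV\)-algebra. In any \(MV\)-algebra the Chang distance satisfies \(d(x,y)=0\) if and only if \(x=y\) (\cite[Proposition 1.2.5]{CD}). Hence
\[
        \Delta=\delta^{-1}(\{0\}).
\]

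Next we exploit first countability at \(0\). Since \(A\) is \(T_1\), the singleton \(\{0\}\) is closed, so every \(a\neq 0\) has a neighbourhood missing \(0\). However, for \(\{0\}\) to be a \(G_\delta\) set we need more: we need \(\bigcap_n W_n=\{0\}\) for a countable open base \(\{W_n\}_{n\in\mathbb N}\) at \(0\). This follows from the \(T_1\) property. Indeed, if \(a\neq 0\), then \(A\setminus\{a\}\) is an open neighbourhood of \(0\), so some \(W_n\subseteq A\setminus\{a\}\), and therefore \(a\notin\bigcap_n W_n\). Thus \(\{0\}=\bigcap_n W_n\).

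Consequently
\[
        \Delta=\delta^{-1}\Big(\bigcap_{n}W_n\Big)=\bigcap_{n}\delta^{-1}(W_n),
\]
and each \(\delta^{-1}(W_n)\) is open in \(A\times A\) by continuity of \(\delta\). Hence \(\Delta\) is a \(G_\delta\)-set, and \(A\) has a \(G_\delta\)-diagonal.

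The only step needing care is the algebraic fact \(d(x,y)=0\Rightarrow x=y\). If we want to avoid citing it, it can be checked directly: \(d(x,y)=0\) forces \(x\ominus y=0\) and \(y\ominus x=0\), because \(u\oplus v=0\) implies \(u=v=0\) (as \(u\le u\oplus v\)). Then \(x\ominus y=0\) means \(x\le y\), and symmetrically \(y\le x\); equivalently, Lemma \ref{Lem:1}(1) gives \(x=(x\wedge y)\oplus 0=x\wedge y\). So \(x=y\). We do not need regularity, and first countability is needed only at \(0\), so \cite[Lemma 3.4]{GLD} is not required.
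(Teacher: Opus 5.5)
Your proof is correct and follows the same route as the paper. Both arguments write $\Delta$ as the preimage of $\{0\}=\bigcap_n W_n$ under the continuous Chang distance $(x,y)\mapsto (x\ominus y)\oplus(y\ominus x)$, use $T_1$ to obtain $\bigcap_n W_n=\{0\}$, and conclude that $\Delta=\bigcap_n \delta^{-1}(W_n)$ is a $G_\delta$-set. Two details you make explicit that the paper leaves implicit are the choice of an \emph{open} countable base at $0$ (needed for each preimage to be open) and a direct check that $d(x,y)=0$ forces $x=y$.
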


\begin{proof}
Since \( A \) is first-countable, take a countable neighborhood base at \( 0 \):
\[
\{U_n : n \in \mathbb{N}\}.
\]
Since \( A \) is \( T_1 \), we have
\[
\bigcap_{n \in \mathbb{N}} U_n = \{0\}.
\]
Define the algebraic distance \(d_A:A\times A\longrightarrow A\) as following:
\[
d_A(x, y) = (x \ominus y) \oplus (y \ominus x).
\]
Since \(A\) is a topological \(MV\)-algebra, \(d_A\) is continuous and
\[
d_A(x, y) = 0 \iff x = y.
\]
Now set
\[
G_n = \{(x, y) \in A \times A : d_A(x, y) \in U_n\}.
\]
Each \( G_n \) is an open set in \( A \times A \) and contains the diagonal \( \Delta=\{(x,x)\in A\times A:x\in A\} \). Moreover,
\[
\bigcap_{n \in \mathbb{N}} G_n = \Delta.
\]
Therefore, \( \Delta \) is a \( G_\delta \)-set.
\end{proof}

\begin{theorem}\label{Them:5.10}
Let \(A\) be a Hausdorff topological \(MV\)-algebra and \(I\) be a compact ideal in \(A\). Then \(A\) is metrizable if and only if both \(I\) and the quotient space \(A/I\) are metrizable.
\end{theorem}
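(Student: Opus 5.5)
The forward direction is immediate. If \(A\) is metrizable, then \(I\) is metrizable as a subspace. For \(A/I\), I will use Theorem \ref{thm:compact-ideal-closed-quotient}: the map \(q:A\to A/I\) is open and perfect. A perfect image of a metrizable space is metrizable (Hanai--Morita--Stone; in Engelking, \cite[Theorem 4.4.15]{Engelking1989}). Hence \(A/I\) is metrizable. Alternatively, Corollary \ref{cor:Hausdorff-compact-ideal-quotient} shows \(A/I\) is a \(T_3\) first countable topological \(MV\)-algebra. Since it is a closed image of a metrizable space, it is metrizable.

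For the converse, assume \(I\) and \(A/I\) are metrizable. The plan is to combine Lemma \ref{para-p} with a \(G_\delta\)-diagonal argument.

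\textbf{Step 1: \(A\) is Tychonoff.} Since \(A\) is Hausdorff, it is \(T_3\) by \cite[Corollary 3.10]{GLD}. I would like complete regularity as well. I expect topological \(MV\)-algebras to be completely regular; the uniformity generated by the entourages \(\{(x,y):d_A(x,y)\in U\}\) should give this. If that is not available, I will argue directly instead: \(A\) is a perfect preimage of the metrizable, hence paracompact Hausdorff, space \(A/I\). By \cite[Theorem 5.1.35]{Engelking1989}, \(A\) is then paracompact. A paracompact Hausdorff space is normal, and hence Tychonoff.

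\textbf{Step 2: \(A\) is a paracompact \(p\)-space.} Now \(q\) is a perfect map from the Tychonoff space \(A\) onto the metrizable space \(A/I\). By Lemma \ref{para-p}, \(A\) is a paracompact \(p\)-space.

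\textbf{Step 3: \(A\) has a \(G_\delta\)-diagonal.} By Theorem \ref{Them:5.7}, \(A\) is first countable, since both \(I\) and \(A/I\) are first countable. Since \(A\) is \(T_1\), Lemma \ref{Lem:dia} gives that \(A\) has a \(G_\delta\)-diagonal.

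\textbf{Step 4: conclude metrizability.} I will invoke Okuyama's and Borges' theorem: a paracompact \(p\)-space with a \(G_\delta\)-diagonal is metrizable.

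The main obstacle is that this last metrization theorem is not among the stated lemmas, so it must be cited externally (e.g.\ \cite{Gran}). If citing it is unwelcome, an alternative route exists. For a paracompact \(p\)-space, the perfect map \(q\) has compact fibres \(x/I\), each homeomorphic to a compact subspace of a metrizable space. The aim would be to assemble a \(\sigma\)-locally finite base of \(A\) from a \(\sigma\)-locally finite base of \(A/I\) together with the countable base at \(0\) provided by Theorem \ref{Them:5.7}. This is technically harder, so I would go with the Okuyama--Borges citation.
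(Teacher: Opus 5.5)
Your proposal is correct and follows essentially the same route as the paper. The forward direction uses subspaces and perfect images of metrizable spaces. For the converse, you get paracompactness of $A$ as a perfect preimage, which the paper packages as Theorem \ref{5.5} using the same inverse-invariance result you cite; then Tychonoffness, Lemma \ref{para-p} for a paracompact $p$-space, Theorem \ref{Them:5.7} with Lemma \ref{Lem:dia} for a $G_\delta$-diagonal, and the Okuyama--Borges metrization theorem. Your fallback argument for complete regularity in Step 1 is exactly the paper's, so the speculative uniformity remark is not needed.
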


\begin{proof}
Let \(\pi:A\longrightarrow A/I\) be the natural quotient map. Then \(\pi\) is an open and perfect map by Theorem \ref{thm:compact-ideal-closed-quotient}.

Suppose that \(A\) is metrizable. Then both \(I\) as a subspace of \(A\)  and \(A/I\) as a perfect image of \(A\) are metrizable \cite[Theorem 2.2.1]{Lin}.

Conversely, suppose that both \(I\) and the quotient space \(A/I\) are metrizable. Then according to Theorem \ref{5.5} it follows that \(A\) is Hausdorf paracompact. Hence, \(A\) is \(T_3\) paracompact space by the fact that every \(T_1\) topological \(MV\)-algebra is \(T_3\) \cite[Corollary 3.10]{GLD}. Thus, \(A\) is Tychonoff. Since \(\pi\) is a perfect map and \(A/I\) is metrizable, \(A\) is a paracompact \(p\)-space by Lemma \ref{para-p}. By Theorem \ref{Them:5.7}, \(A\) is first countable, so \(A\) has a \(G_\delta\)-diagonal by Lemma \ref{Lem:dia}. Hence, \(A\) is metrizable because every regular paracompact \(p\)-space with a \(G_\delta\)-diagonal is metrizable \cite[Theorem 2.2.12 and Corollary 2.2.19]{Lin}.
\end{proof}

\begin{corollary}\label{cor3:compact-kernel-three-space}
Let \(A\) be a Hausdorff topological \(MV\)-algebra and \(I\) be a compact ideal in \(A\). Then \(A\) is separable and metrizable if and only if both \(I\) and the quotient space \(A/I\) are separable and metrizable.
\end{corollary}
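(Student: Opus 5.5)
We derive the corollary from Theorem \ref{Them:5.10} together with one extra fact: a metrizable space is separable if and only if it is second countable, equivalently Lindel\"of. So it is enough to add separability to both directions of Theorem \ref{Them:5.10}, and we do this by passing to Lindel\"ofness and using Theorem \ref{thm:compact-kernel-three-space}(4).

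\emph{Forward direction.} Suppose \(A\) is separable and metrizable. Theorem \ref{Them:5.10} gives that \(I\) and \(A/I\) are metrizable.
\begin{itemize}
\item[(a)] A separable metrizable space is second countable, and second countability passes to subspaces. Hence \(I\) is second countable, and so separable.
\item[(b)] \(A/I=\pi(A)\) is a continuous image of a separable space, and separability is preserved by continuous images. So \(A/I\) is separable.
\end{itemize}
One could also use that \(\pi\) is open and perfect by Theorem \ref{thm:compact-ideal-closed-quotient}, but the continuous-image argument already suffices.

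\emph{Converse.} Suppose \(I\) and \(A/I\) are separable and metrizable.
\begin{itemize}
\item[(a)] Theorem \ref{Them:5.10} shows that \(A\) is metrizable.
\item[(b)] A separable metrizable space is second countable, hence Lindel\"of, so \(A/I\) is Lindel\"of.
\item[(c)] By Theorem \ref{thm:compact-kernel-three-space}(4), whose proof uses only that \(\pi\) is perfect, \(A\) is Lindel\"of.
\item[(d)] A Lindel\"of metrizable space is second countable, hence separable. So \(A\) is separable and metrizable.
\end{itemize}
In this direction the separability of \(I\) is not actually needed beyond its use in Theorem \ref{Them:5.10}.

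The argument is short. The only step needing care is the appeal to Theorem \ref{Them:5.10}, whose hypotheses must be met. They are: \(A\) is Hausdorff and \(I\) is compact, which are exactly the hypotheses of the corollary. The standard equivalences for metrizable spaces (separable \(\Leftrightarrow\) second countable \(\Leftrightarrow\) Lindel\"of) come from \cite{Engelking1989}.
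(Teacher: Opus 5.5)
Your proof is correct. Your forward direction is the same as the paper's; the difference is how you get separability of \(A\) in the converse.

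The paper proves separability directly. It takes a countable dense \(M_1\subseteq I\) and a countable \(M_2\subseteq A\) with \(\pi(M_2)\) dense in \(A/I\). It then shows that \(\{(i\ominus j)\oplus k: i\in M_2,\ j,k\in M_1\}\) is dense in \(A\). The argument uses openness of \(\pi\) and the coset parametrization \(i/I=\Phi_i(I\times I)\) from Proposition \ref{lem:class-parametrization}.

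You instead go through Lindel\"ofness. \(A/I\) is Lindel\"of, so \(A\) is Lindel\"of as a perfect preimage (Theorem \ref{thm:compact-kernel-three-space}(4)), and a Lindel\"of metrizable space is separable. Your route is shorter and purely topological. However, it needs both the compactness of \(I\) (to make \(\pi\) perfect) and the metrizability of \(A\) already supplied by Theorem \ref{Them:5.10}.

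The paper's density argument uses the \(MV\)-structure and works for an arbitrary ideal, with no compactness or metrizability. So it shows more: separability of \(I\) and \(A/I\) always forces separability of \(A\).

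On your side remark: Theorem \ref{Them:5.10} uses only the metrizability of \(I\), not its separability. In any case, separability of \(I\) is automatic here, because a compact metrizable space is separable.
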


\begin{proof}
Let \(\pi:A\longrightarrow A/I\) be the natural quotient map. Then \(\pi\) is an open and perfect map by Theorem \ref{thm:compact-ideal-closed-quotient}.

Firstly, we shall prove that \(A\) is separable if both \(I\) and the quotient space \(A/I\) are separable.

Let \(M_1\) be a countable dense subset of \(I\) and \(M_2\) a countable subset of \(A\) such that \(\pi(M_2)\) is dense in \(A/I\). We have the following Claim 1:

{\bf Claim 1:} \(M=\{(i\ominus j)\oplus k:i\in M_2, j,k\in M_1\}\) is dense in \(A\).

In fact, take any nonempty open set \(O\) in \(A\). Since \(\pi\) is open and \(\pi(M_2)\) is dense in \(A/I\), there is \(i\in M_2\) such that \(\pi(i)\in \pi(O)\), which implies that
 \[i/I\cap O\neq\emptyset \tag{1}\]
Since \(M_1\) is dense in \(I\), one can easily verify \(M_1\times M_1\) is dense in \(I\times I\). Hence,
\[\Phi_i(M_1\times M_1)=\{(i\ominus j)\oplus k: j,k\in M_1\}\subseteq \Phi_i(I\times I)=i/I \]
is dense in \(i/I\) because \(\Phi_i\) is continuous, where the last equality follows from Proposition \ref{lem:class-parametrization} and \(\Phi_i\) is defined in
Theorem \ref{thm:compact-ideal-closed-quotient}. Hence, form (1) there are \(j,k\in M_1\) such that
 \[(i\ominus j)\oplus k \in O,\]
which implies that \(M\) is dense in \(A\) because \((i\ominus j)\oplus k \in M\). This completes the proof of Claim 1.

From Claim 1 and Theorem \ref{Them:5.10} it follows that \(A\) is separable and metrizable if both \(I\) and the quotient space \(A/I\) are separable and metrizable.

Suppose that \(A\) is separable and metrizable. Then \(I\) as a subspace of \(A\) is separable and metrizable. Also, \(A/I\) as a continuous image of \(A\) is separable.
Applying Theorem \ref{Them:5.10} again, it follows that \(A/I\) is metrizable.
\end{proof}

\begin{corollary}\label{cor3:compact-kernel-three-space}
Let \(A\) be a Hausdorff topological \(MV\)-algebra and \(I\) be a compact ideal in \(A\). Then \(A\) is compact and metrizable if and only if both \(I\) and the quotient space \(A/I\) are compact and metrizable.
\end{corollary}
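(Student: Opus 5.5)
The plan is to derive this corollary from Theorem \ref{Them:5.10} together with part (1) of Theorem \ref{thm:compact-kernel-three-space}. Let \(\pi:A\longrightarrow A/I\) be the natural quotient map. By Theorem \ref{thm:compact-ideal-closed-quotient}, \(\pi\) is open and perfect. The metrizability half and the compactness half will be handled separately and then combined.

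For the forward direction, assume \(A\) is compact and metrizable.

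\begin{itemize}
\item Theorem \ref{Them:5.10} gives that \(I\) and \(A/I\) are metrizable.
\item \(I\) is compact by hypothesis.
\item \(A/I=\pi(A)\) is compact as a continuous image of the compact space \(A\), which is part (1) of Theorem \ref{thm:compact-kernel-three-space}.
\end{itemize}

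For the converse, assume \(I\) and \(A/I\) are compact and metrizable.

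\begin{itemize}
\item By Theorem \ref{Them:5.10}, \(A\) is metrizable. Its hypotheses hold, since \(A\) is Hausdorff and \(I\) is a compact ideal.
\item By part (1) of Theorem \ref{thm:compact-kernel-three-space}, \(A\) is compact. The reason is that \(A=\pi^{-1}(A/I)\) is the preimage of a compact set under the perfect map \(\pi\), so Lemma \ref{lem:perfect-preimage-compact} applies.
\end{itemize}

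Together these give that \(A\) is compact and metrizable.

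There is no real obstacle here, because all the heavy lifting sits in Theorem \ref{Them:5.10}. The one point to check is that compactness of \(I\) is in any case part of the standing hypothesis, so the condition on \(I\) in the statement amounts only to the metrizability of \(I\). If a more self-contained argument were wanted, the converse could bypass the paracompact \(p\)-space machinery: a compact Hausdorff space with a \(G_\delta\)-diagonal is metrizable. Here \(A\) is compact by the above, and it is first countable by Theorem \ref{Them:5.7}. Since \(A\) is Hausdorff and hence \(T_1\), Lemma \ref{Lem:dia} then gives \(A\) a \(G_\delta\)-diagonal. I would mention this only as a remark and rely on Theorem \ref{Them:5.10} for the main argument.
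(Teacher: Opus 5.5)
Your proposal is correct and takes the same route as the paper, which also obtains the corollary directly from part (1) of Theorem~\ref{thm:compact-kernel-three-space} (compactness in both directions) and Theorem~\ref{Them:5.10} (metrizability in both directions). Your optional remark via \v{S}neider's theorem is also valid but unnecessary: \(A\) is compact Hausdorff, it is first countable by Theorem~\ref{Them:5.7}, and it therefore has a \(G_\delta\)-diagonal by Lemma~\ref{Lem:dia}.
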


\begin{proof}
This directly follows from Theorems \ref{thm:compact-kernel-three-space} and \ref{Them:5.10}.
\end{proof}

A subset \(C\) of an \(MV\)-algebra \(A\) is called \textbf{convex} if, for all \(x, y \in C\) and \(z \in A\), the condition
\(x \leq z \leq y\) implies \(z \in C\). A topological \(MV\)-algebra \(A\) is called \textbf{local convex} if every point \(a \in A\)
possesses a convex neighborhood base.
\begin{theorem}\label{cor3:compact-kernel-three-space}
Let \(A\) be a local convex topological \(MV\)-algebra and let \(I\) be an ideal of \(A\). Then \(A\) is first countable if and only if both \(I\) and \(A/I\) are first countable.
\end{theorem}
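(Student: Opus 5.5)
The direction "\(A\) first countable \(\Rightarrow\) \(I\) and \(A/I\) first countable" is immediate. \(I\) is a subspace of \(A\). \(A/I\) is a continuous open image of \(A\) by Corollary \ref{cor:natural-quotient-open}, and first countability is preserved by continuous open surjections. The work is in the converse, where \(I\) is no longer assumed compact, so Theorem \ref{Them:5.7} and the route through Lemma \ref{Lem:ideal} (a countable base of \(I\) in \(A\)) are not available. Instead I would build a countable base at \(0\) in \(A\) directly from the two given countable bases and local convexity. By \cite[Lemma 3.4]{GLD} this suffices. The key observation is that a neighbourhood of \(0\) that is convex contains every \(z\) with \(0\le z\le y\) for some \(y\) in it, i.e. it is downward closed.

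For the construction I would proceed as follows. Let \(\{U_n\}\) be a countable base at \(0/I\) in \(A/I\). For each \(n\), using that \(q\) is open and \(A\) is locally convex, I would choose a convex open neighbourhood \(V_n\) of \(0\) with \(q(V_n)\subseteq U_n\); then \(\{q(V_n)\}\) is again a base at \(0/I\). Let \(\{G_m\}\) be a countable base at \(0\) in \(I\), with \(G_m=H_m\cap I\) for \(H_m\) open in \(A\). Choose a convex neighbourhood \(W_m\subseteq H_m\) of \(0\), so that \(W_m\cap I\subseteq G_m\). The candidate base is the countable family
\[
\mathcal B=\{\,q^{-1}(q(V_n))\cap W_m : n,m\in\mathbb N\,\},
\]
whose members are open by openness of \(q\).

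For the verification, take any neighbourhood \(O\) of \(0\) and proceed in four steps.
\begin{enumerate}
\item[(a)] By continuity of \(\oplus\) at \((0,0)\) and local convexity, pick a convex neighbourhood \(W\) of \(0\) with \(W\oplus W\subseteq O\).
\item[(b)] Pick \(n\) with \(q(V_n)\subseteq q(W)\). This is possible because \(q(W)\) is a neighbourhood of \(0/I\).
\item[(c)] Pick \(m\) with \(G_m\subseteq W\cap I\).
\item[(d)] Let \(x\in q^{-1}(q(V_n))\cap W_m\). By (b) there is \(v\in W\) with \(q(v)=q(x)\), i.e. \(x\ominus v\in I\). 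Lemma \ref{Lem:1}(1) gives \(x=(x\wedge v)\oplus(x\ominus v)\). Since \(0\le x\wedge v\le v\in W\) and \(W\) is convex, \(x\wedge v\in W\). Since \(0\le x\ominus v\le x\in W_m\) and \(W_m\) is convex, \(x\ominus v\in W_m\cap I\subseteq G_m\subseteq W\). Hence \(x\in W\oplus W\subseteq O\).
\end{enumerate}
This shows that \(\mathcal B\) is a countable neighbourhood base at \(0\), and \(A\) is first countable.

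The main obstacle I expect is exactly the one the convexity hypothesis is there to solve. Without compactness we cannot control the whole coset of a point near \(I\), so we must replace the "base of \(I\)" argument by this decomposition of \(x\) into a part near \(0\) and a part in \(I\). Both pieces lie below points in convex neighbourhoods of \(0\), which is the step that needs care. I would also double-check the small facts used along the way: that \(x\ominus v\le x\) and \(x\wedge v\le v\), and that a convex set containing \(0\) is downward closed.
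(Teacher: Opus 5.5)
Your proof is correct and is essentially the paper's argument. It uses the same choice of a convex \(W\) with \(W\oplus W\subseteq O\), the same decomposition \(x=(x\wedge v)\oplus(x\ominus v)\), and the same observation that a convex neighbourhood of \(0\) is downward closed; the only difference is cosmetic: your countable family is doubly indexed, \(q^{-1}(q(V_n))\cap W_m\), whereas the paper uses decreasing sequences and the diagonal family \(U_n\cap\pi^{-1}(V_n)\) (and your members are neighbourhoods of \(0\) rather than necessarily open sets unless you take \(W_m\) open, which is harmless).
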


\begin{proof}
Let \(\pi:A\longrightarrow A/I\) be the natural quotient map. Then \(\pi\) is an open and continuous map by Corollary \ref{cor:natural-quotient-open}.

Suppose that \(A\) is first countable. Then both \(I\) as a subspace of \(A\)  and \(A/I\) as a continuous and open image of \(A\) are first countable.

Conversely, suppose that both \(I\) and \(A/I\) are first countable. Since \(A\) is local convex topological \(MV\)-algebra, one can construct a countable family \(\{U_n : n\in\mathbb{N} \}\) of
open neighborhoods of \(0\) satisfying the following conditions for each \(n\in\mathbb{N}\):
\begin{enumerate}
\item [(1)] \( U_{n+1}\subseteq U_n;\)
\item [(2)] \(\{U_n\cap I: n\in\mathbb{N}\}\) is a neighborhood base at \(0\) in \(I\);
\item [(3)] each \(U_n\) is convex.
\end{enumerate}
Since \(\pi\) is continuous and open, and \(A/I\) is first countable, we can choose a countable neighborhood base \(\{V_n:n\in\mathbb{N}\}\) at \(\pi(0)\) in \(A/I\) such that \(V_n\subseteq \pi(U_n)\) and \(V_{n+1}\subseteq V_n\) for each \(n\in\mathbb{N}\).
We claim that the family
\[\mathcal {B}=\{U_n\cap \pi^{-1}(V_n):n\in \mathbb{N}\}\]
is a local base at \(0\) in \(A\).

Take an open neighborhood \(U\) of \(0\). Since \(A\) is a local convex topological \(MV\)-algebra, one can choose an open and convex neighborhood \(W\) of \(0\) such that
 \[W\oplus W\subseteq U. \tag{a}\]
By (2), there is an \(m\in \mathbb{N}\) such that
\[U_m\cap I\subseteq W\cap I.\tag{b}\] Hence, there is \(V_n\) such that \(V_n\subseteq \pi(W)\) because \(\pi\) is open and  \(\{V_i:i\in\mathbb{N}\}\) is a neighborhood base at \(\pi(0)\).
Put \(k=\text{max~}\{m,n\}\). Then \(U_k\subseteq U_m\) by (1). Hence form (b) we have
\[U_k\cap I\subseteq W\cap I.\tag{c}
\]
We claim that \[U_k\cap \pi^{-1}(V_k)\subseteq U.\]

Take any \(x\in U_k\cap \pi^{-1}(V_k)\). Then
\[\pi(x)\in V_k\subseteq V_n\subseteq \pi(W),\]
hence, there is \(y\in W\) such that \(\pi(x)=\pi(y)\).
This implies that \(x\ominus y\in I\). Also \(x\ominus y\leq x\) and \(x\in U_k\). Because \(U_k\) is convex and contains \(0\), it is downward closed; hence \(x\ominus y \in U_k\). Therefore, by (c)
\[x\ominus y \in U_k\cap I\subseteq W \tag{d}
\]
It is obvious that \(x\wedge y\leq y\). Hence, from the fact that \(y\in W\) and \(W\) is convex,  it follows
\[x\wedge y\in W. \tag{e}\]
Combining (a), (d),(e) with (1) in Lemma \ref{Lem:1}, we have
\[x=((x\wedge y)\oplus (x\ominus y)\in W\oplus W\subseteq U.\]
This implies that \(U_k\cap \pi^{-1}(V_k)\subseteq U\). Thus we have proved that \(\mathcal {B}\) is a countable neighborhood base at \(0\) in \(A\). Then \(A\) is first countable by \cite[Lemma 3.4]{GLD}.
\end{proof}
\begin{question}

Let \(A\) be a topological \(MV\)-algebra and let \(I\) be an ideal of \(A\). Is \(A\) first countable if both \(I\) and \(A/I\) are first countable?
\end{question}

\section*{References}


\def\cprime{$'$}

\end{document}